\documentclass[11pt]{article}
\usepackage{amsmath,amssymb,amsthm}
\usepackage{hyperref} 
\usepackage{tikz-cd} 
\theoremstyle{definition} 
\newtheorem{definition}{Definition}[section]
\newtheorem{remark}[definition]{Remark}
\theoremstyle{plain}
\newtheorem{theorem}[definition]{Theorem}
\newtheorem{proposition}[definition]{Proposition}

\newtheorem{lemma}[definition]{Lemma}
 
\title{Nef but Non-Semi-positive Line Bundles on Hopf Manifolds}
\author{Xiaojun Wu} 
\date{\today} 
\begin{document} 
\def\A{\mathcal{A}}
\def\cI{\mathcal{I}}
\def\Z{\mathbb{Z}}
\def\Q{\mathbb{Q}}  \def\C{\mathbb{C}}
 \def\R{\mathbb{R}}
 \def\N{\mathbb{N}}
 \def\H{\mathbb{H}}
  \def\P{\mathbb{P}}
 \def\rC{\mathrm{C}}
  \def\d{\partial}
 \def\dbar{{\overline{\partial}}}
\def\dzbar{{\overline{dz}}}
\def \ddbar {\partial \overline{\partial}}
\def\cB{\mathcal{B}}
\def\cD{\mathcal{D}}  \def\cO{\mathcal{O}}
\def\cbarO{\overline{\mathcal{O}}}
\def\D{\mathcal{D}}
\def\cC{\mathcal{C}}
\def\cF{\mathcal{F}}
\def \rank{\mathrm{rank}}
\def \deg{\mathrm{deg}}
\def \tot{\mathrm{Tot}}
\def \id{\mathrm{id}}
\bibliographystyle{plain}
\def \End{\mathrm{End}}
\def \dim{\mathrm{dim}}
\def \div{\mathrm{div}}
\def \ker{\mathrm{Ker}}
\def \im{\mathrm{Im}}
\def \rC{\mathrm{Cone}}
\newcommand{\Ub}{\mathcal{U}}
\newcommand{\dcech}{\check{\delta}}
\newcommand{\dcechg}{\delta\!\!\!\check{\delta}}
\newcommand{\lc}{\mathcal{L}}
\newcommand{\ec}{\mathcal{E}}
\maketitle
\begin{abstract}
We give a complete characterization of the nef, effective, pseudo-effective, and semi-positive cones on Hopf manifolds. In particular, by computing the Ueda classes on non-diagonal Hopf surfaces, we show that the invariant elliptic curve is nef but not semi-positive. Via Bott--Chern cohomology calculations, we construct new examples of nef but non-semi-positive line bundles on Hopf manifolds of arbitrary dimension.
\end{abstract}
\medskip \noindent\textbf{2020 Mathematics Subject Classification.} Primary 32J25; Secondary 14C20. \smallskip \noindent
\\
\textbf{Key words and phrases.} Ueda theory, semi-positive line bundle, Hopf manifold.
\section{Introduction}

It is a fundamental problem in complex geometry to understand the relationship
between numerical effectiveness and curvature positivity of holomorphic line
bundles.
Motivated by Fujita's question \cite{Fuj83}, one may ask whether every nef
line bundle admits a smooth Hermitian metric with semi-positive Chern
curvature.
This expectation was disproved by Demailly, Peternell and Schneider
\cite[Example~1.7]{DPS94}, who constructed nef line bundles that are not
semi-positive (see also \cite{KO70, Yau74}).

In \cite{Ued83}, T. Ueda studied neighborhoods of compact complex curves with topologically trivial normal bundle, introducing cohomological invariants, now called Ueda classes, which measure the obstruction to analytically linearizing such neighborhoods. These classes arise from the formal linearization of transition functions along the submanifold and detect when a formally flat structure cannot be integrated analytically. While Ueda's original work focused on curves, subsequent developments by T. Koike \cite{Koi20,Koi21} extended Ueda theory to higher-codimensional smooth submanifolds with unitary flat normal bundles. In this generalized setting, the obstruction classes have been applied to derive criteria relevant to semi-positivity problems for Hermitian line bundles restricted to these submanifolds \cite[Theorem 1.4]{Koi21} and \cite[Theorem 1.2]{Koi22} (see also \cite[Theorem 1.1]{Koi15}, \cite[Corollary 2.5]{Wu26}).

In \cite[Article 2, Section 9]{Nee}, Neeman investigated the nontriviality of Ueda classes associated with an elliptic curve in a complex surface. He showed that, among minimal non-algebraic surfaces, Hopf surfaces are the only ones for which such nontrivial Ueda classes can occur
(See Proposition \ref{prop-class}).

For the invariant elliptic curve $D$ on a non-diagonal Hopf surface $X$, the corresponding Ueda classes were claimed by Neeman \cite{Nee}, without proof. In Section~\ref{Hopf-surf}, we give an explicit computation of these Ueda classes. These coincide with the Ueda classes of the line bundle $\mathcal O_X(D)$ in the sense of Koike \cite{Koi21}. By \cite[Theorem~1.2]{Koi22}, the non-vanishing of the Ueda class at some degree implies that $\mathcal O_X(D)$ is nef but not semi-positive.

A natural question is whether $\mathcal O_X(D)$ is the only nef but
non-semi-positive line bundle on a non-diagonal Hopf surface.
More generally, one may ask for a characterization of nef, effective, 
pseudo-effective and semi-positive line bundles on Hopf manifolds.

Since every holomorphic line bundle on a Hopf manifold is flat by a theorem
of Mall \cite{Mall91}, this problem reduces to understanding the
Bott--Chern first Chern classes of flat line bundles.
Our first main result gives an explicit relation among these Chern classes.
More precisely, if $L_\delta$ denotes the flat line bundle corresponding to
$\delta\in\C^*$, then
\[
\log|\delta_2|\,c_1(L_{\delta_1})
=
\log|\delta_1|\,c_1(L_{\delta_2})
\]
(Proposition \ref{prop:chern-relation}).
Consequently, the Bott--Chern first Chern class of a flat line bundle is
completely determined by the modulus $|\delta|$.
As a consequence, combining our formula with the computation of the Bott--Chern cohomology of Hopf manifolds in \cite{IO25}, we obtain explicit descriptions of the nef, effective and pseudo-effective cones and characterize the nef, effective and pseudo-effective line bundles (see Theorem \ref{thm:hopf-surface-cones}).

To determine the semi-positive cone, one must distinguish between diagonal and
non-diagonal Hopf surfaces. For diagonal Hopf surfaces, using the explicit
construction of Hermitian metrics in the proof of
\cite[Proposition~6.4]{DPS94}, every nef line bundle is
semi-positive. In contrast, Koike's non-semi-positivity criterion implies
that every nontrivial nef line bundle on a non-diagonal Hopf surface fails to
admit a smooth Hermitian metric with semi-positive Chern curvature.
Consequently, the semi-positive cone coincides with the nef cone in the
diagonal case, whereas it is trivial in the non-diagonal case.

The higher-dimensional case is considerably more subtle. Exploiting the
natural flag of Hopf submanifolds together with the Bott--Chern cohomology of
Hopf manifolds, we obtain a complete description of the nef, effective, 
pseudo-effective and semi-positive cones of arbitrary primary Hopf manifolds (Proposition \ref{cone-hopf-mfd}).
One of the main ingredients is the recent computation of the Bott--Chern cohomology of primary Hopf manifolds in \cite{IO25}. We also give an alternative proof of the description of the nef, effective,  pseudo-effective, and semi-positive line bundles (Remark \ref{rem: Paun}), based on a result of P\u{a}un \cite{Paun98}. This approach further extends to yield the corresponding classification for secondary Hopf manifolds (Remark \ref{rem: secondary-Hopf}).

 \paragraph{}
\textbf{Acknowledgements.} I would like to thank my postdoctoral advisor, Professor Takayuki Koike for several helpful and stimulating discussions related to this work. This research was supported by the JSPS Postdoctoral Fellowships for Research in Japan (Standard). I am also grateful to Osaka Metropolitan University and the Osaka Central Advanced Mathematical Institute (OCAMI) for providing an excellent research environment.

\section{Positive cones on Hopf surface}
\label{Hopf-surf}
We recall the following classical result of Kodaira (\cite[Theorem 30]{Koi66}) and Poincaré--Dulac concerning the normal form of contractions defining primary Hopf surfaces.

\begin{proposition}[Normal form of primary Hopf surfaces]
Let
\[
X=(\mathbb{C}^2\setminus\{0\})/\langle\gamma\rangle
\]
be a primary Hopf surface, where $\gamma \in \mathrm{Aut}(\mathbb{C}^2 \setminus \{0\})$ is a contraction mapping fixing the origin. 

Then, after a holomorphic change of coordinates, the generator
\(\gamma\) is conjugated to
\[
\gamma(z_1,z_2)=(\alpha z_1+\lambda z_2^m,\beta z_2),
\]
where
\[
0<|\alpha|\leq |\beta|<1,\qquad m\in\mathbb{N},
\]
and
\[
\lambda(\beta^m-\alpha)=0.
\]
Moreover, \(X\) is called diagonal if and only if \(\lambda=0\). Otherwise, \(X\) is called non-diagonal, and in this case the resonance condition yields
\(
\alpha=\beta^m
\)
for some integer \(m\geq1\).
\end{proposition}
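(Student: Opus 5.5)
The plan follows the classical Poincaré--Dulac argument for contractions of $\C^2$ fixing the origin. First, $\gamma$ extends holomorphically across the origin by Hartogs' theorem, giving a germ $\gamma:(\C^2,0)\to(\C^2,0)$. Since $\gamma$ is a contraction, every eigenvalue of $d\gamma(0)$ has modulus in $(0,1)$. Nonvanishing follows because $\gamma$ is an automorphism of $\C^2\setminus\{0\}$, so its inverse also extends and $d\gamma(0)$ is invertible. Modulus less than $1$ follows because the iterates $\gamma^n$ converge uniformly to $0$ on a compact neighbourhood of the origin; Cauchy estimates then give $d\gamma^n(0)=d\gamma(0)^n\to 0$, forcing spectral radius $<1$. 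I order the eigenvalues as $\alpha,\beta$ with $0<|\alpha|\le|\beta|<1$ and put $d\gamma(0)$ in upper triangular Jordan form accordingly.

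Second, I carry out formal normalisation degree by degree. At degree $k\ge 2$, removing a homogeneous term means solving the homological equation $L_k h = f_k$, where $L_k h = h\circ A - A h$ on vector-valued homogeneous polynomials of degree $k$. The eigenvalues of $L_k$ are $\alpha^{i}\beta^{j}-\alpha$ and $\alpha^i\beta^j-\beta$ with $i+j=k$. Because $|\alpha|\le|\beta|<1$ and $k\ge 2$, the only possible vanishing (the resonances) are:
\begin{itemize}
\item $\alpha=\beta^m$ in the first component, corresponding to the monomial $z_2^m$;
\item nothing in the second component, since $|\alpha^i\beta^j|\le|\beta|^k<|\beta|$.
\end{itemize}
Hence all terms can be removed except $\lambda z_2^m e_1$, and only when $\alpha=\beta^m$. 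The linear part also fits this form: if $\alpha=\beta$ with a nontrivial Jordan block, rescaling gives $(\alpha z_1+z_2,\beta z_2)$, which is the case $m=1$, $\lambda=1$. In every case we land on $\gamma(z_1,z_2)=(\alpha z_1+\lambda z_2^m,\beta z_2)$ with $\lambda(\beta^m-\alpha)=0$, and we can rescale $z_1$ to take $\lambda\in\{0,1\}$.

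Third, and this is the main obstacle, I must show that the formal conjugacy converges. Here the situation is in the Poincaré domain: $0$ is not in the convex hull of the eigenvalues (in the multiplicative sense, $|\alpha|,|\beta|<1$). So the small divisors $|\alpha^i\beta^j-\alpha|$ are bounded below by a uniform constant times $|\alpha|$ once $k$ is large, and the divisors are bounded away from zero apart from the finitely many resonances. A standard majorant-series argument then gives convergence of the conjugating map $h$. Alternatively, for contractions one can define the conjugacy directly as $h=\lim_n N^{-n}\circ\gamma^n$, where $N$ is the normal form, after first matching jets to high enough order that the convergence is geometric. I would try this limit approach first, since it avoids majorant bookkeeping.

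Finally, the local conjugacy near $0$ extends to a global biholomorphism of $\C^2$ via $h = N^{-n}\circ h\circ\gamma^n$, because every point is pulled into the domain of convergence by $\gamma^n$. This conjugates $\gamma$ to $N$ on $\C^2\setminus\{0\}$. The statement about diagonal versus non-diagonal Hopf surfaces then follows from the resonance analysis above: $\lambda\ne0$ forces $\alpha=\beta^m$.
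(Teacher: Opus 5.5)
Your proposal is correct and is essentially the classical argument the paper relies on: the paper gives no proof of its own, only citing Kodaira's Theorem~30 and the Poincaré--Dulac normal form. Your steps are exactly that classical proof: spectral radius $<1$ from the contraction property, the resonance analysis showing that $\alpha=\beta^m$ in the first component is the only possible resonance, convergence in the Poincaré domain, and globalization via $N^{-n}\circ h\circ\gamma^{n}$ (with the small implicit point that $|\beta|<1$ makes the resonant exponent $m$ unique, so at most one term $\lambda z_2^m$ survives).
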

Recall the following consequence of the explicit construction of Hermitian metrics in the proof of
\cite[Proposition~6.4]{DPS94}.

\begin{lemma}
Let \(X\) be a diagonal primary Hopf surface. Then the two coordinate axes
\[
\{z_1=0\},\qquad \{z_2=0\}
\]
descend to elliptic curves \(C,D\subset X\). Moreover,
\(
-K_X=C+D,
\)
and the line bundles \(\mathcal O_X(C)\) and \(\mathcal O_X(D)\) admit semi-positive Hermitian metrics. In particular, the anticanonical line bundle \(\mathcal O_X(-K_X)\) is semi-positive.
\end{lemma}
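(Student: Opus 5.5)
We work on the universal cover $\C^2\setminus\{0\}$, where $\gamma(z_1,z_2)=(\alpha z_1,\beta z_2)$ with $0<|\alpha|\le|\beta|<1$. The plan is to build explicit $\gamma$-invariant data upstairs and then descend it to $X$.

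\textbf{Step 1: the curves $C$ and $D$.} The axes $\{z_1=0\}$ and $\{z_2=0\}$ are $\gamma$-invariant, so they descend to closed curves in $X$. Concretely, $\{z_1=0\}\setminus\{0\}=\C^*_{z_2}$, and $\gamma$ acts on it by $z_2\mapsto\beta z_2$. Hence $C\cong \C^*/\langle\beta\rangle$, which is an elliptic curve. In the same way, $D\cong\C^*/\langle\alpha\rangle$.

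\textbf{Step 2: the anticanonical divisor.} Consider the holomorphic $2$-vector field $\sigma=z_1z_2\,\partial_{z_1}\wedge\partial_{z_2}$ on $\C^2\setminus\{0\}$. Under $\gamma$ we have $d\gamma=\mathrm{diag}(\alpha,\beta)$, so
\[
\gamma_*\bigl(\partial_{z_1}\wedge\partial_{z_2}\bigr)=\alpha\beta\,\partial_{z_1}\wedge\partial_{z_2},
\qquad
(z_1z_2)\circ\gamma^{-1}=(\alpha\beta)^{-1}z_1z_2 .
\]
These factors cancel, so $\sigma$ is $\gamma$-invariant. It therefore descends to a holomorphic section of $\wedge^2T_X=\mathcal O_X(-K_X)$. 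Its divisor is $\{z_1z_2=0\}$, which descends to $C+D$ with multiplicity one. Hence $-K_X=C+D$.

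\textbf{Step 3: semi-positive metrics on $\mathcal O_X(C)$ and $\mathcal O_X(D)$.} I follow the DPS construction. Let $s_C$ be the canonical section of $\mathcal O_X(C)$. Upstairs it pulls back to $z_1$, viewed as a section of the trivial bundle with factor of automorphy $\alpha$. So I need a positive function $h$ on $\C^2\setminus\{0\}$ satisfying
\[
h\circ\gamma=|\alpha|^{-2}h .
\]
Then $|s|^2_h=|z_1|^2h$ is $\gamma$-invariant, and the curvature is $-i\ddbar\log h$. The naive choice is $h=|z_1|^{-2}$. It is flat, but it is singular along $C$, so it must be modified.

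\textbf{Step 3, key idea.} Choose real constants $a,b$ with $|\alpha|^{a}=|\alpha|^{-2}$ and $|\beta|^{b}=|\alpha|^{-2}$, that is,
\[
a=-2,\qquad b=-2\log|\alpha|/\log|\beta|<0 .
\]
Then set
\[
h=\bigl(|z_1|^{2}+|z_2|^{2\mu}\bigr)^{-1},\qquad \mu=\log|\alpha|/\log|\beta|\ge 1 .
\]
Here $\mu\ge1$ holds because $|\alpha|\le|\beta|<1$. The power $\mu$ is chosen so that $|\beta z_2|^{2\mu}=|\alpha|^2|z_2|^{2\mu}$. Both summands therefore scale by $|\alpha|^2$, so $h\circ\gamma=|\alpha|^{-2}h$.

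\textbf{Step 3, curvature.} The curvature is $i\ddbar\log\bigl(|z_1|^2+|z_2|^{2\mu}\bigr)$. This is semi-positive because $\log(e^{u}+e^{v})$ is convex and nondecreasing in $(u,v)$, and $u=\log|z_1|^2$ and $v=\mu\log|z_2|^2$ are plurisubharmonic. Smoothness requires care when $\mu\notin\mathbb N$, near $\{z_2=0\}$. There, however, $z_1\neq0$ on $\C^2\setminus\{0\}$, so $|z_1|^2$ dominates. The function $|z_2|^{2\mu}$ is only $C^{1}$ or Hölder at $z_2=0$, though.

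\textbf{Main obstacle.} The real difficulty is the smoothness of $h$ along $\{z_2=0\}$ when $\mu$ is non-integral. I would replace $|z_2|^{2\mu}$ by $\chi$, a regularized maximum of $\log|z_1|^2$ and $\mu\log|z_2|^2+c$. The regularized maximum stays equivariant under $\gamma$, since both arguments shift by the same constant $\log|\alpha|^2$. It is smooth, because near $z_2=0$ it coincides with $\log|z_1|^2$. It is plurisubharmonic, being a convex nondecreasing combination of psh functions. Then $e^{-\chi}$ is a smooth equivariant weight with $i\ddbar\chi\ge0$, which gives a smooth semi-positive metric on $\mathcal O_X(C)$.

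\textbf{Conclusion.} The symmetric construction, swapping the roles of $z_1,\alpha$ and $z_2,\beta$ and using exponent $1/\mu$ on $|z_1|$, handles $\mathcal O_X(D)$. Tensoring the two metrics gives a semi-positive metric on $\mathcal O_X(C+D)=\mathcal O_X(-K_X)$.
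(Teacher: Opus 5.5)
Your proof is correct and takes essentially the route the paper relies on: an explicit $\gamma$-equivariant weight on $\C^2\setminus\{0\}$, as in the proof of \cite[Proposition~6.4]{DPS94}, which the paper cites without giving details. Replacing $\log(|z_1|^2+|z_2|^{2\mu})$ by a regularized maximum of $\log|z_1|^2$ and $\mu\log|z_2|^2+c$ makes the metric genuinely $C^\infty$ when $\mu=\log|\alpha|/\log|\beta|\notin\N$, a point the citation leaves implicit.
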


For a non-diagonal contraction in normal form, such as $\gamma(z_1, z_2) = (\alpha z_1 + \lambda z_2^m, \beta z_2)$ satisfying the resonance condition $\alpha = \beta^m$ with $\lambda \neq 0$, only the coordinate hyperplane $z_2 = 0$ descends to an invariant elliptic curve $D$ on $X$. The anticanonical divisor is given by
$
-K_X = (m+1)D,
$
where $m$ is the order of the resonance defining the non-diagonal normal form.
Note that \cite[Proposition~6.4]{DPS94} also implies that the anticanonical line bundle of any Hopf surface is nef.
To simplify the notation, (after rescaling the first coordinate) we may assume that $\lambda=1$ in the following.

Now we compute the Ueda type of $(X,D)$ to conclude that this divisor $D$ is not semi-positive.
\begin{proposition}[Descent Data and Gauge Equivalence via Automorphy Factors]
\label{prop:automorphy-factors}
Let $X = \widetilde{X} / \Gamma$ be a complex manifold, where $\Gamma = \pi_1(X, x_0)$ acts freely and properly discontinuously on the universal cover $\widetilde{X}$. Suppose that $\widetilde{X}$ is Stein and contractible $($satisfying $H^1(\widetilde{X}, \mathcal{O}_{\widetilde{X}}) = 0$ and $H^2(\widetilde{X}, \mathbb{Z}) = 0)$. Then:
\begin{enumerate}
    \item The Picard group $\operatorname{Pic}(\widetilde{X})$ is trivial. In particular, any holomorphic line bundle $L \to X$ pulled back to $\widetilde{X}$ is analytically isomorphic to the trivial line bundle $\widetilde{X} \times \mathbb{C}$.
    \item The line bundle $L$ is uniquely determined by lifting the $\Gamma$-action on $\widetilde{X}$ to an action on the total space $\widetilde{X} \times \mathbb{C}$ given by
    \begin{equation}\label{eq:gamma-action}
    \gamma \cdot (x, v) = \bigl(\gamma(x), J(\gamma, x)v\bigr), \quad \text{for } \gamma \in \Gamma, \; x \in \widetilde{X}, \; v \in \mathbb{C},
    \end{equation}
    where $J \colon \Gamma \times \widetilde{X} \to \mathbb{C}^*$ is a holomorphic factor of automorphy satisfying the $1$-cocycle condition
    \begin{equation}\label{eq:1-cocycle}
    J(\gamma_1 \circ \gamma_2, x) = J(\gamma_1, \gamma_2(x)) J(\gamma_2, x) \quad \text{for all } \gamma_1, \gamma_2 \in \Gamma, \; x \in \widetilde{X}.
    \end{equation}
    \item Two factors of automorphy $J_1, J_2 \colon \Gamma \times \widetilde{X} \to \mathbb{C}^*$ define isomorphic holomorphic line bundles over $X$ if and only if they differ by a $\Gamma$-coboundary; that is, there exists a non-vanishing holomorphic function $f \in H^0(\widetilde{X}, \mathcal{O}_{\widetilde{X}}^*)$ such that
    \begin{equation}\label{eq:coboundary-relation}
    J_1(\gamma, x) = J_2(\gamma, x) \cdot \frac{f(\gamma(x))}{f(x)} \quad \text{for all } \gamma \in \Gamma, \; x \in \widetilde{X}.
    \end{equation}
\end{enumerate}
\end{proposition}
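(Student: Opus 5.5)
We prove the three items in order. Item 1 is a statement about $\widetilde{X}$ alone; items 2 and 3 follow by a descent argument applied to the trivialization produced in item 1.

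For item 1, we use the exponential sheaf sequence $0\to\Z\to\cO_{\widetilde X}\to\cO^*_{\widetilde X}\to 0$ on $\widetilde{X}$. Its long exact sequence contains
\[
H^1(\widetilde X,\cO_{\widetilde X})\to H^1(\widetilde X,\cO^*_{\widetilde X})\to H^2(\widetilde X,\Z).
\]
Both outer groups vanish by hypothesis, so $\operatorname{Pic}(\widetilde X)=H^1(\widetilde X,\cO^*_{\widetilde X})=0$. (For the Hopf case, note that $\widetilde X=\C^2\setminus\{0\}$ is not Stein, yet $H^1(\C^n\setminus\{0\},\cO)=0$ for $n\ge 3$; the argument only ever uses these two vanishings, so that is all we will invoke.) Consequently, for any holomorphic line bundle $L\to X$, the pullback $\pi^*L$ admits a nowhere-vanishing holomorphic section $s$, which gives a trivialization $\pi^*L\cong \widetilde X\times\C$.

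For item 2, observe that $\Gamma$ acts on $\pi^*L=\widetilde X\times_X L$ through its action on the first factor, and this action is by bundle maps covering $\gamma$. Transporting it through the trivialization gives $\gamma\cdot(x,v)=(\gamma(x),J(\gamma,x)v)$. The multiplier is given explicitly by
\[
J(\gamma,x)=\frac{s(x)}{s(\gamma x)},
\]
where the quotient is taken in the common fibre $L_{\pi(x)}$; since $s$ never vanishes, $J$ is holomorphic and nowhere zero. The cocycle identity \eqref{eq:1-cocycle} is equivalent to $(\gamma_1\gamma_2)\cdot p=\gamma_1\cdot(\gamma_2\cdot p)$, and can also be checked directly from the formula for $J$. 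For the converse, take any factor of automorphy $J$. It defines a free, properly discontinuous action on $\widetilde X\times\C$ that is linear on fibres, so the quotient $(\widetilde X\times\C)/\Gamma$ is a holomorphic line bundle $L_J\to X$. Applying the same construction to $L_J$, with the section $x\mapsto(x,1)$, recovers $J$. Hence $L\cong(\widetilde X\times\C)/\Gamma$, and the bundle $L$ is determined by $J$.

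For item 3, suppose first that $J_1$ and $J_2$ differ as in \eqref{eq:coboundary-relation}. Then the map $(x,v)\mapsto(x,f(x)v)$ (or its inverse, depending on convention) intertwines the two $\Gamma$-actions and therefore descends to an isomorphism $L_{J_1}\cong L_{J_2}$. Conversely, suppose $\phi\colon L_{J_1}\to L_{J_2}$ is an isomorphism. Its pullback is a $\Gamma$-equivariant automorphism of $\widetilde X\times\C$, hence of the form $(x,v)\mapsto (x,g(x)v)$ with $g\in H^0(\widetilde X,\cO^*)$. Equivariance gives $g(\gamma x)J_1(\gamma,x)=J_2(\gamma,x)g(x)$, which is \eqref{eq:coboundary-relation} with $f=g^{-1}$. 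We also record the sign convention used in the paper: replacing the trivializing section $s$ by $fs$ changes $J$ by exactly this coboundary. Thus the choice in item 1 only affects $J$ up to coboundary, and $L\mapsto[J]$ is a well-defined bijection $\operatorname{Pic}(X)\cong H^1(\Gamma,H^0(\widetilde X,\cO^*))$.

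The main obstacle is bookkeeping rather than content. The left/right conventions must be consistent between \eqref{eq:gamma-action} and \eqref{eq:1-cocycle}, and the bundle-theoretic isomorphism must be correctly matched with the group-cohomological one. Everything else reduces to the vanishing of $\operatorname{Pic}(\widetilde X)$ established in item 1.
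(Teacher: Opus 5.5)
Your proof is correct and follows the same route as the paper. For item 1 you use the exponential sequence, and for items 2 and 3 you use the standard descent argument, where associativity of the lifted action gives the cocycle condition and a change of fibre trivialization $v\mapsto f(x)v$ gives the coboundary relation; you simply make this more explicit via $J(\gamma,x)=s(x)/s(\gamma x)$ and by checking both directions of item 3.

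Your parenthetical about $\mathbb{C}^n\setminus\{0\}$ is harmless but beside the point. The paper only applies the proposition to the Stein contractible cover $U_{\mathrm{univ}}$ of Siu's neighborhood, and for Hopf surfaces ($n=2$) the vanishing of $H^1(\mathbb{C}^2\setminus\{0\},\mathcal{O})$ would in fact fail.
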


\begin{proof}
Since $\widetilde{X}$ is Stein and contractible, $H^1(\widetilde{X}, \mathcal{O}_{\widetilde{X}}) = 0$ and $H^2(\widetilde{X}, \mathbb{Z}) = 0$. The exponential sheaf sequence $0 \to \mathbb{Z} \to \mathcal{O}_{\widetilde{X}} \to \mathcal{O}_{\widetilde{X}}^* \to 0$ yields $\operatorname{Pic}(\widetilde{X}) \cong H^1(\widetilde{X}, \mathcal{O}_{\widetilde{X}}^*) =0$, proving (1). 

Statements (2) and (3) follow directly from the classification of fiber bundles over quotients by properly discontinuous group actions: the associativity of the action $\gamma_1 \cdot (\gamma_2 \cdot (x, v)) = (\gamma_1 \circ \gamma_2) \cdot (x, v)$ yields \eqref{eq:1-cocycle}, while an isomorphism between the bundle structures corresponds to a change of fiber trivialization $v \mapsto f(x)v$, inducing \eqref{eq:coboundary-relation}.
\end{proof}

%Since the non-diagonal Hopf surface is generic among all Hopf surface, the anticanonical line bundle of a generic Hopf surface is not semipositive.
%However, note that  (\cite{DPS94}).

%Choose a branch of $\log\beta$, and define
%\[
%\log\alpha:=m\log\beta.
%\]
%Since
%\[
%e^{m\log\beta}=\beta^m=\alpha,
%\]
%this indeed determines 
Fix a branch of $\log\alpha$. 
%In particular,
%\[
%\frac{\log\alpha}{m}=\log\beta,
%\]
%and therefore
%\[
%e^{\log\alpha/m}=e^{\log\beta}=\beta.
%\]
First, we describe the representations associated with the normal bundle and general flat line bundles on the invariant curve.
The normal bundle $N_{D/X}$ of the invariant elliptic curve $D$ is the flat line bundle associated with the representation
\[
\rho:\pi_1(D)\longrightarrow\mathbb C^*,
\]
given by
\[
\rho(1,0)=1,\qquad
\rho(0,1)=\beta.
\]
Indeed, viewing
\[
D=(\mathbb C^*\times\{0\})/\langle\gamma\rangle,
\]
the universal covering map
\[
\mathbb C\longrightarrow\mathbb C^*,\qquad
\tilde z_1\longmapsto e^{2\pi\sqrt{-1}\tilde z_1},
\]
identifies $\pi_1(D)\cong\mathbb Z^2$, where the generators act on
$\mathbb C$ by
\[
(1,0)\cdot\tilde z_1=\tilde z_1+1,\qquad
(0,1)\cdot\tilde z_1
=\tilde z_1+\frac{\log\alpha}{2\pi\sqrt{-1}}.
\]
The differential of $\gamma$ induces the action
\[
(1,0)\cdot dz_2=dz_2,\qquad
(0,1)\cdot dz_2=\beta\,dz_2
\]
on $N_{D/X}$.
Similarly, we can consider more generally a flat line bundle $L_{\delta}$ associated with the representation
\[
\rho:\pi_1(D)\longrightarrow\mathbb C^*,
\]
given by
\[
\rho(1,0)=1,\qquad
\rho(0,1)=\delta.
\]
Under this notation, $N_{D/X}=L_\beta$.

Next, we recall Siu's neighborhood construction and the lift of the action to the universal cover.

By Siu's theorem \cite[Main Theorem]{Siu76} (see also \cite[Theorem 1]{Dem90}), there exists a Stein open neighborhood $U$ of
$\{z_2=0\}\simeq\C^*$ in $\C^2\setminus\{0\}$ together with a smooth
deformation retraction $U\to\{z_2=0\}$.
Let $U_{\mathrm{univ}}$ denote the pullback of $U$ under the universal
covering map $\C\to\C^*$. Then $U_{\mathrm{univ}}$ is a Stein open
neighborhood of $\C$, and since $\C$ is contractible and
$U_{\mathrm{univ}}$ deformation retracts onto $\C$, it is itself
contractible.

After possibly shrinking $U_{\mathrm{univ}}$, the action of
$\langle\gamma\rangle$ lifts to an action of $\pi_1(D)$ on
$U_{\mathrm{univ}}$ given by
\[
(1,0)\cdot(\tilde z_1,z_2)
=
(\tilde z_1+1,z_2),
\]
and
\[
(0,1)\cdot(\tilde z_1,z_2)
=
\left(
\frac{1}{2\pi\sqrt{-1}}
\log\bigl(\alpha e^{2\pi\sqrt{-1}\tilde z_1}+z_2^m\bigr),
\beta z_2
\right),
\]
where the branch of $\log$ is chosen so that
\[
\log\bigl(\alpha e^{2\pi\sqrt{-1}\tilde z_1}\bigr)
=
\log\alpha+2\pi\sqrt{-1}\tilde z_1.
\]
In particular, we can define the automorphy factors as in Proposition \ref{prop:automorphy-factors}.

Before proceeding to the local calculations, we clarify our strategy for computing the obstruction classes via \v{C}ech cohomology.

The normal bundle is flat, which is equivalent to admitting a flat connection compatible with the holomorphic structure. Since the elliptic curve is compact Kähler, there exists a Hermitian metric on it such that this flat connection is also compatible with the metric. 
In particular, there exists a gauge function that transforms the factor of automorphy into a unitary representation of $\pi_1(D)$. We will extend this local gauge function to the full cover $U_{\mathrm{univ}}$ of the neighborhood.
However, because there is no canonical extension of this flat Hermitian metric to a neighborhood of the elliptic curve, computing the (generalized) Ueda obstruction classes via differential forms is non-trivial. We therefore adapt Ueda's classical approach using \v{C}ech cohomology.

%In this setting, it is more convenient to compute the generalized Ueda obstruction class of $L$ using its \v{C}ech representative, because the factor of automorphy does not naturally define a Hermitian metric.
We record the following remark, which will not be needed in what follows.
\begin{remark}
Since the subvariety $D$ is one-dimensional, we may use the following open cover to compute the  Ueda obstruction class via \v{C}ech cohomology.
Choose any open cover of \(X\), which need not be Stein. In general, the associated \v{C}ech complex does not compute the sheaf cohomology of \(X\). However, the corresponding \v{C}ech cocycle restricts to a \v{C}ech cocycle with respect to the induced open cover of the subvariety, obtained by intersecting each open set with the subvariety. Since the subvariety is one-dimensional, every open subset is Stein. Consequently, the induced cover is a Stein cover, and the corresponding \v{C}ech cocycle computes the sheaf cohomology of the subvariety.
\end{remark}
%However, we observe that since the non-semipositivity criterion is in terms of non-triviality of the generalized Ueda obstruction class, it is enough to show the non-triviality of the corresponding \v{C}ech class with respect to any open cover (not necessarily Stein) since the \v{C}ech c

Let
\[
\tau=\frac{\log\alpha}{2\pi\sqrt{-1}},
\]
where the branch of $\log\alpha$ is fixed as above. Choose
\[
c=\sqrt{-1}\,\frac{\log|\delta|}{\operatorname{Im}\tau}\in\mathbb C.
\]
Then
\[
|e^c|=1,\qquad |e^{c\tau}|=|\delta|^{-1}.
\]
It follows that the gauge transformation
\[
f(\tilde z_1)=e^{c\tilde z_1}
\]
transforms the factor of automorphy $\rho$ into the unitary representation
\[
\rho'(1,0)=e^c,\qquad
\rho'(0,1)=\delta e^{c\tau},
\]
which satisfies
\[
|\rho'(1,0)|=|\rho'(0,1)|=1.
\]
Under the change of fiber coordinate
\(
v=f dz_2,
\)
the factor of automorphy of $L_{\beta}$ is transformed from $\rho$ to the unitary representation $\rho'$.

We extend the holomorphic function
$
f(\tilde z_1)=e^{c\tilde z_1}
$
to $U_{\mathrm{univ}}$ by defining
\[
f(\tilde z_1,z_2)
=
\exp\!\left(
c\left(
\frac{1}{2\pi\sqrt{-1}}
\log\bigl(\alpha e^{2\pi\sqrt{-1}\tilde z_1}+z_2^m\bigr)
-\tau
\right)
\right),
\]
where the branch of $\log$ is chosen as above.

Since \(U_{\mathrm{univ}}\) is Stein and contractible, the pullback of
\(\mathcal O(D)\) to \(U_{\mathrm{univ}}\) is trivial. We identify it with
\(\mathcal O_{U_{\mathrm{univ}}}\) by means of the pullback of the canonical
section of \(\mathcal O(D)\). Using the extended gauge function \(f\), we
define a new factor of automorphy representing the same line bundle
\(\mathcal O(D)\). Restricting to \(z_2=0\), this factor of automorphy reduces
to the unitary representation \(\rho'\) defining
\[
\mathcal O(D)|_D=N_{D/X}.
\]
Take a Stein cover of \(U\) consisting of two open subsets of the form
\[
(A\times\mathbb{C}) \cap U,
\]
where \(A\) is an annulus in the \(z_1\)-plane. The intersection of these two open subsets has two connected components, corresponding to the two possible choices of lifts of the annuli in the covering space. With respect to the natural trivializations induced by these lifts,
the transition function is given by 
$$
\beta  \exp\!\left(
c\left(
\frac{1}{2\pi\sqrt{-1}}
\log\bigl(\alpha e^{2\pi\sqrt{-1}\tilde z_1}+z_2^m\bigr)
-\tau-\tilde{z}_1
\right)
\right)
$$
on one connected component of the intersection of the two Stein open sets, and is equal to 1 on the other connected component.
Similarly, the Ueda obstruction classes can be computed 
as follows.
%from the Taylor expansion of  \[ \partial \left(
%c\left(
%\frac{1}{2\pi\sqrt{-1}}
%\log\bigl(\alpha e^{2\pi\sqrt{-1}\tilde z_1}+z_2^m\bigr)
%-\tau-\tilde{z}_1
%\right)
%\right).\] 
%In particular, the \(i\)-th Ueda obstruction class vanishes for
%\(i<m\), whereas the \(m\)-th Ueda obstruction class is nontrivial.
%(The \v{C}ech cocycle corresponding to the degree-\(m\) term is the constant
%\(
%\frac{c}{2\pi\sqrt{-1}}.
%\))

%Same calculation shows that the $m$-th Ueda obstruction class of $L_\delta$ (see \cite[Section 2.2]{Koi21}) can be computed from the degree $m$ term in the Taylor expansion of  \[ \partial \left(
%c\left(
%\frac{1}{2\pi\sqrt{-1}}
%\log\bigl(\alpha e^{2\pi\sqrt{-1}\tilde z_1}+z_2^m\bigr)
%-\tau-\tilde{z}_1
%\right)
%\right).\]
%Similarly, we have the following result. 
\begin{proposition}\label{prop:ueda-computation-summary}
Let $X$ be a non-diagonal primary Hopf surface, and let $D \subset X$ be the invariant elliptic curve. For any flat line bundle $L_\delta \in \operatorname{Pic}(X)$ with $|\delta| \neq 1$, its Ueda obstruction classes $u_i(L_\delta) \in H^1(D, \mathcal{O}_D)$ satisfy:
\begin{enumerate}
    \item $u_i(L_\delta) = 0$ for all $1 \le i < m$.
    \item The $m$-th Ueda obstruction class $u_m(L_\delta)$ is non-trivial.
\end{enumerate}
In particular, for any $|\delta| \neq 1$, the flat line bundle $L_\delta$ (and specifically $\mathcal{O}_X(D) = L_\beta$) fails to admit any smooth semi-positive Hermitian metric.
\end{proposition}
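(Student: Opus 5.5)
The plan is to compute the Ueda classes directly from the explicit transition function written just before the statement, by Taylor expanding in the normal coordinate $z_2$. Set
\[
g(\tilde z_1,z_2)=\frac{1}{2\pi\sqrt{-1}}\log\bigl(\alpha e^{2\pi\sqrt{-1}\tilde z_1}+z_2^m\bigr)-\tau-\tilde z_1
=\frac{1}{2\pi\sqrt{-1}}\log\Bigl(1+\frac{z_2^m}{\alpha e^{2\pi\sqrt{-1}\tilde z_1}}\Bigr).
\]
Using the chosen branch of $\log$, this equals
\[
\frac{1}{2\pi\sqrt{-1}}\Bigl(\frac{z_2^m}{\alpha e^{2\pi\sqrt{-1}\tilde z_1}}-\frac{z_2^{2m}}{2\alpha^2e^{4\pi\sqrt{-1}\tilde z_1}}+\cdots\Bigr).
\]
For general $L_\delta$, the same computation applies with $\beta$ replaced by $\delta$ and with the gauge constant $c=\sqrt{-1}\log|\delta|/\operatorname{Im}\tau$. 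The two-set cover transition function is then $\rho'(0,1)\exp(c\,g)$ on one component of the intersection and a unitary constant on the other. Since $\delta\ne0$ and $|\delta|\neq1$, we have $c\neq 0$.

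First, I would use the coordinate $w=z_2$, which defines $D$, to check that $\rho'$ is the unitary flat structure on $N_{D/X}$ and that the transition of the defining function differs from the unitary one by $\exp(c\,g)=1+c\,g+O(z_2^{2m})$. The correction therefore vanishes to order exactly $m$ along $D$. Following Ueda's inductive procedure (equivalently Koike's definition in \cite[Section 2.2]{Koi21}), the system is of type $\ge m$. Concretely, the $i$-th jets of the transition functions are unitary-flat for $i<m$, so we may take trivial normalizations up to order $m-1$, and then $u_i(L_\delta)=0$ for $1\le i<m$. The degree-$m$ term gives the \v{C}ech $1$-cocycle
\[
\frac{c}{2\pi\sqrt{-1}}\,\alpha^{-1}e^{-2\pi\sqrt{-1}\tilde z_1}
\]
on one component and $0$ on the other. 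This is a cocycle with values in $N_{D/X}^{-m}$ twisted by the unitary data; in the line-bundle setting the obstruction class lives in $H^1(D,\mathcal O_D)$ after this identification. Undoing the fibre coordinate $v=f\,dz_2$ (the power $\rho'^{m}$ compensates the factor $\alpha^{-1}e^{-2\pi\sqrt{-1}\tilde z_1}=1/z_1$ in the original coordinate), the cocycle becomes the constant $c/(2\pi\sqrt{-1})$ on one component and $0$ on the other, up to a unitary gauge.

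The main obstacle is the second step: proving that this constant cocycle is not a coboundary. On the elliptic curve $D$, with the two-annulus cover, a cocycle that is constant $a$ on one intersection component and $0$ on the other represents $a$ times the generator of $H^1(D,\mathcal O_D)\cong\C$. This generator is dual to the loop $(0,1)$, i.e.\ it is the image of the period class under $H^1(D,\C)\to H^1(D,\mathcal O_D)$. I would verify the non-vanishing by pairing with the holomorphic $1$-form $d\tilde z_1$ via Serre duality: a coboundary $h_1-h_0$ would make $h_1-h_0=a$ on the component crossing the $\gamma$-direction, yielding a holomorphic function on $\C$ with $h(\tilde z_1+1)=h$ and $h(\tilde z_1+\tau)=h+a$. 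Then $h'$ is doubly periodic, hence constant, which forces $a=\mathrm{const}\cdot\tau$ and $0=\mathrm{const}\cdot1$, so $a=0$. Since $c\neq0$, we get $u_m(L_\delta)\neq0$. One must also take care that the twisting by the unitary character $\rho'^{m}$ does not make the relevant cohomology vanish. If the twisted bundle were nontrivial, $H^1$ would be $0$, so I must check that Koike's class here is untwisted. This holds because the normal bundle character enters precisely to cancel the $z_1^{-1}$ factor.

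Finally, having $u_i=0$ for $i<m$ and $u_m\neq0$, I would invoke \cite[Theorem~1.2]{Koi22}, the non-semi-positivity criterion, to conclude that $L_\delta$ admits no smooth semi-positive metric. Specialising to $\delta=\beta$, where $|\beta|<1$, gives the claim for $\mathcal O_X(D)=L_\beta$.
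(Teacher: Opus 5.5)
Your proposal is correct and follows the paper's proof. Both expand the transition function $\rho'(0,1)\exp(c\,g)$ in $z_2$, observe that nothing appears below order $m$, read off the degree-$m$ \v{C}ech cocycle proportional to $e^{-2\pi\sqrt{-1}\tilde z_1}$, show it is non-trivial in $H^1(D,N_{D/X}^{-m})\cong H^1(D,\mathcal O_D)$, and conclude with Koike's criterion.

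The one real difference is the non-triviality check. You first turn the cocycle into a constant one and then lift to $\mathbb{C}$, using that the derivative is doubly periodic; the paper instead uses Laurent expansions on the annuli.

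Two small corrections to your write-up, neither affecting the conclusion:
\begin{itemize}
\item The factor $\alpha^{-1}e^{-2\pi\sqrt{-1}\tilde z_1}$ equals $1/(\alpha z_1)$, not $1/z_1$, so the resulting constant is $c/(2\pi\sqrt{-1}\,\alpha)$ rather than $c/(2\pi\sqrt{-1})$.
\item What absorbs this factor is the unitary flat frame $z_2^m e^{-2\pi\sqrt{-1}\tilde z_1}$ of $N_{D/X}^{-m}$. This comes from the normal bundle's character, whose $m$-th power is trivial by $\beta^m=\alpha$. It does not come from $\rho'$ of $L_\delta$, which cancels out of the obstruction cocycle.
\end{itemize}
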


\begin{proof}
The transition function on the two-set annulus cover of the Siu neighborhood $U$ is given by
\[
g_{12}(\tilde{z}_1, z_2) = \delta \exp\left( c\left( \frac{1}{2\pi\sqrt{-1}} \log\bigl(\alpha e^{2\pi\sqrt{-1}\tilde z_1} + z_2^m\bigr) - \tau - \tilde{z}_1 \right) \right)
\]
on one connected component of the intersection of the two Stein open sets, and is equal to 1 on the other connected component.
Following Koike's formulation \cite[Section~2.2]{Koi21}, the $i$-th Ueda obstruction class corresponds to the degree-$i$ coefficient in $z_2$ of the Taylor expansion of $\partial \log g_{12}$. Differentiating the logarithmic term with respect to $\tilde{z}_1$ shows that all terms of order $i < m$ vanish, whereas the degree-$m$ term yields the non-zero constant class. 
More precisely, on one connected component of the intersection of the two Stein open sets, we have
\[
    \log g_{12}(\tilde{z}_1, z_2) = \log\delta + \frac{c}{2\pi\sqrt{-1}} \cdot \frac{z_2^m}{\alpha e^{2\pi\sqrt{-1}\tilde{z}_1}} + \mathcal{O}(z_2^{2m}),
    \]
whereas $ \log g_{12}(\tilde{z}_1, z_2)=0$  on the other connected component. 
Therefore, the coefficient of $\frac{z_2^m}{\alpha}$
 defines the \v{C}ech
1-cocycle 
$$\frac{c}{2\pi\sqrt{-1}} \cdot \frac{1}{ e^{2\pi\sqrt{-1}\tilde{z}_1}}=\frac{c}{2\pi\sqrt{-1}} \cdot \frac{1}{ z_1} $$ 
on one connected component of the overlap, and 0 on the other. 
(Since $|\delta| \neq 1$, $c \neq 0$.)
This cocycle represents a non-trivial element of $H^1\big(D, \mathcal{O}_D(N_{D/X}^{*\otimes m})\big) \cong H^1(D, \mathcal{O}_D)$.
Indeed, suppose that this \v{C}ech $1$-cocycle were a coboundary. Then there
would exist holomorphic functions $f_1$ and $f_2$ on the two Stein open sets
whose \v{C}ech differential gives the above cocycle. Considering the Laurent
expansions of
\[
f_1(z_1)-f_2(z_1)=0
\]
and
\[
f_1(z_1)-f_2(\alpha z_1)=\frac{c}{2\pi\sqrt{-1}} \cdot \frac{1}{ z_1}
\]
on the two connected components of the intersection, we obtain a
contradiction by the uniqueness of Laurent expansions.

The non-semi-positivity then follows directly from Koike's criterion \cite[Theorem~1.2]{Koi22}.
\end{proof}
Now we investigate the relationship between the first Chern classes of the flat line bundles $L_\delta$.
\begin{definition}
Let $X$ be a compact complex manifold. The Bott--Chern cohomology group of
bidegree $(1,1)$ is defined by
\[
H^{1,1}_{\mathrm{BC}}(X,\mathbb{C})
:=
\frac{
\{\alpha\in A^{1,1}(X)\mid d\alpha=0\}
}{
\sqrt{-1}\partial\overline{\partial} A^{0,0}(X)
}.
\]
The real Bott--Chern cohomology group is the real subspace
\[
H^{1,1}_{\mathrm{BC}}(X,\mathbb{R})
:=
\left\{
[\alpha]\in H^{1,1}_{\mathrm{BC}}(X,\mathbb{C})
\mid
\overline{[\alpha]}=[\alpha]
\right\}.
\]
Equivalently,
\[
H^{1,1}_{\mathrm{BC}}(X,\mathbb{R})
=
\frac{
\{\alpha\in A^{1,1}(X,\mathbb{R})\mid d\alpha=0\}
}{
\sqrt{-1}\partial\overline{\partial} C^\infty(X,\mathbb{R})
}.
\]
\end{definition}
Recall that for any line bundle $L \in \operatorname{Pic}(X)$, its Bott--Chern first Chern class $c_1(L) \in H^{1,1}_{\mathrm{BC}}(X, \mathbb{R})$ is represented by the Chern curvature form $\frac{\sqrt{-1}}{2\pi} \Theta(L, h) = -\frac{\sqrt{-1}}{2\pi} \partial \bar{\partial} \log h$ associated with any smooth Hermitian metric $h$ on $L$.
\begin{proposition}[Bott--Chern Chern Class Relation for Flat Line Bundles]
\label{prop:chern-relation}
Let $X$ be a primary Hopf manifold of dimension $n$, and for each $\delta \in \mathbb{C}^*$, let $L_\delta$ denote the flat line bundle associated with the representation of $\pi_1(X) \cong \mathbb{Z}$ mapping $1 \mapsto \delta$. Then for any $\delta_1, \delta_2 \in \mathbb{C}^*$, their Bott--Chern first Chern classes satisfy the proportionality formula
\begin{equation}\label{eq:chern-relation}
\log|\delta_2|\,c_1(L_{\delta_1}) = \log|\delta_1|\,c_1(L_{\delta_2}) \quad \text{in } H^{1,1}_{\mathrm{BC}}(X,\mathbb{R}).
\end{equation}
In particular, $c_1(L_\delta)$ depends solely on the modulus $|\delta|$.
\end{proposition}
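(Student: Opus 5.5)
The plan is to construct explicit Hermitian metrics on each $L_\delta$ from a single potential function on $\C^n\setminus\{0\}$, and then to read off the curvature, which will visibly scale with $\log|\delta|$.

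Write $X=(\C^n\setminus\{0\})/\langle\gamma\rangle$, where $\gamma$ is a holomorphic contraction. The line bundle $L_\delta$ is the quotient of $(\C^n\setminus\{0\})\times\C$ by the action $\gamma\cdot(z,v)=(\gamma(z),\delta v)$. A Hermitian metric on $L_\delta$ is therefore the same thing as a smooth positive function $h$ on $\C^n\setminus\{0\}$ satisfying $h(\gamma(z))\,|\delta|^2=h(z)$, i.e.\ $|v|_h^2=h(z)|v|^2$ is invariant.

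The key step is to find one smooth real function $\phi$ on $\C^n\setminus\{0\}$ with $\phi(\gamma(z))=\phi(z)+1$. I would obtain it from a fundamental domain and a partition of unity. Since $\gamma$ acts freely and properly discontinuously and $\C^n\setminus\{0\}\to X$ is the $\Z$-cover, it is classified by a map $X\to S^1$. Concretely, choose a smooth function $\chi$ on $\C^n\setminus\{0\}$ with $\chi=0$ near $0$ (inside some $\gamma^{k}$-image region) and $\chi=1$ near infinity, arranged so that $\chi-\chi\circ\gamma^{-1}$ has compact support. Then set $\phi=\sum_{k\in\Z}(\chi\circ\gamma^{-k}-\mathbf 1_{k<0})$, suitably normalized. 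Equivalently, $\phi$ is a lift of a smooth map $X\to\R/\Z$ representing the generator of $H^1(X,\Z)\cong\Z$ (here $n\ge2$, so $\pi_1(X)\cong\Z$). Any such lift satisfies $\phi\circ\gamma=\phi\pm1$, and I replace $\phi$ by $-\phi$ if needed. I expect the main technical point to be the existence and smoothness of $\phi$, and the lift-of-a-circle-valued-map argument handles it cleanly.

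With $\phi$ in hand, define $h_\delta=|\delta|^{2\phi}=e^{2\log|\delta|\,\phi}$. Then
\[
h_\delta(\gamma z)\,|\delta|^2=|\delta|^{2\phi(z)+2}\,|\delta|^2\cdot|\delta|^{-2}\cdots,
\]
so I must check the sign of the equivariance. Choosing instead $h_\delta=|\delta|^{-2\phi}$ gives $h_\delta(\gamma z)|\delta|^2=|\delta|^{-2\phi(z)-2+2}=h_\delta(z)$, as required. Hence $h_\delta$ descends to a smooth metric on $L_\delta$, and its curvature is
\[
-\tfrac{\sqrt{-1}}{2\pi}\ddbar\log h_\delta=\tfrac{\sqrt{-1}}{\pi}\log|\delta|\,\ddbar\phi .
\]
The form $\ddbar\phi$ is $\gamma$-invariant, because $\phi\circ\gamma-\phi$ is constant, so it descends to a closed real $(1,1)$-form $\omega$ on $X$. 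Therefore $c_1(L_\delta)=\log|\delta|\,[\tfrac{\sqrt{-1}}{\pi}\omega]$ in $H^{1,1}_{\mathrm{BC}}(X,\R)$ for every $\delta$.

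Both claims now follow. Multiplying $c_1(L_{\delta_1})=\log|\delta_1|\,[\tfrac{\sqrt{-1}}{\pi}\omega]$ by $\log|\delta_2|$, and comparing with $c_1(L_{\delta_2})$ multiplied by $\log|\delta_1|$, yields \eqref{eq:chern-relation}. The displayed formula also shows directly that $c_1(L_\delta)$ depends only on $|\delta|$; in particular $c_1(L_\delta)=0$ when $|\delta|=1$, consistent with unitary flat bundles.
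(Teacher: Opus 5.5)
Your argument is correct, and it reaches the identity by a different route from the paper. The paper argues structurally. It builds metrics on $L_\delta$ by a partition of unity on a two-set cover pulled back from $S^1$. It then asserts that $\delta\mapsto c_1(L_\delta)$ is a continuous (indeed smooth) homomorphism $\C^*\to H^{1,1}_{\mathrm{BC}}(X,\R)$ for the quotient Fr\'echet topology, notes that the unitary bundles lie in the kernel, and invokes the fact that every continuous homomorphism $(\R_{>0},\times)\to(\R^k,+)$ has the form $t\mapsto \log t\cdot v$.

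You instead show the linearity in $\log|\delta|$ directly at the level of forms. You take one smooth $\varphi$ on $\C^n\setminus\{0\}$ with $\varphi\circ\gamma=\varphi+1$, obtained as a lift of a circle-valued map generating $H^1(X,\Z)$; this exists because $\C^n\setminus\{0\}$ is simply connected for $n\ge 2$. It yields the metrics $h_\delta=|\delta|^{-2\varphi}$ on all $L_\delta$ simultaneously, so $c_1(L_\delta)=\log|\delta|\,[\theta]$, where $\theta$ is the descent of $\frac{\sqrt{-1}}{\pi}\partial\overline{\partial}\varphi$.

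Your route gives a self-contained proof. It needs neither Mall's theorem, nor any topology on Bott--Chern cohomology, nor the continuity of $c_1$ in $\delta$, which the paper justifies only briefly. It also gives an explicit representative of the common direction. The paper's argument additionally records that this direction is nonzero, via the Gauduchon pairing with $\mathcal O_X(D)$, so the kernel is exactly $\{|\delta|=1\}$. That fact is not part of the displayed identity, but the paper uses it later; in your setup it amounts to checking $[\theta]\neq 0$.

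A few small repairs to the write-up:
\begin{itemize}
\item The series $\sum_{k\in\Z}(\chi\circ\gamma^{-k}-\mathbf{1}_{k<0})$ diverges as written, since its terms tend to $1$ as $k\to+\infty$ and to $-1$ as $k\to-\infty$. The correct normalization is $\sum_{k\in\Z}(\chi\circ\gamma^{-k}-\mathbf{1}_{k\ge 0})$, which is locally a finite sum and satisfies $\varphi\circ\gamma=\varphi-1$. Nothing depends on this, because your lifting argument supplies $\varphi$ independently.
\item It is $\sqrt{-1}\partial\overline{\partial}\varphi$, not $\partial\overline{\partial}\varphi$, that is a real form.
\item The abandoned first display (the one ending in $\cdots$) should simply be deleted.
\end{itemize}
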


\begin{proof}
By a theorem of Mall \cite[Theorem~4]{Mall91}, every holomorphic line bundle over a Hopf manifold $X$ is flat, which yields an isomorphism $\operatorname{Pic}(X) \cong H^1(X, \mathbb{C}^*) \cong \mathbb{C}^*$. Consider the first Chern class map
\[
c_1 \colon \operatorname{Pic}(X) \longrightarrow H^{1,1}_{\mathrm{BC}}(X,\mathbb{R}).
\]
Since $X$ is diffeomorphic to $S^1 \times S^{2n-1}$, we can project $X$ onto $S^1$. Cover $S^1$ by two open intervals $U_1, U_2 \subset S^1$ whose overlap $U_1 \cap U_2$ consists of two disjoint connected components. Pulling these back yields an open cover $\{X_1, X_2\}$ of $X$, where $X_i = \pi^{-1}(U_i)$. On this cover, any flat line bundle $L_\delta \in \operatorname{Pic}(X)$ is trivialized such that its transition function on one component of $X_1 \cap X_2$ is $1$ and on the other component is the constant $\delta \in \mathbb{C}^*$. 

Fixing local flat metrics $h_1, h_2$ on $X_1, X_2$ and gluing them via a partition of unity $\{\rho_1, \rho_2\}$ subordinate to $\{X_1, X_2\}$ constructs a smooth Hermitian metric $h_\delta$ on $L_\delta$. The local expression of $h_\delta$ depends smoothly on $\delta \in \mathbb{C}^*$. Thus, the curvature forms $\Theta(L_\delta, h_\delta) = -\partial\bar{\partial}\log h_\delta$ vary smoothly in $\delta$, implying that the first Chern class map
\[
c_1 \colon \operatorname{Pic}(X) \longrightarrow H^{1,1}_{\mathrm{BC}}(X, \mathbb{R})
\]
induces a smooth Lie group homomorphism, where $H^{1,1}_{BC}(X,\C)$ is endowed with the quotient topology induced from the Fréchet topology on the space of smooth $(1,1)$-forms.

The kernel of $c_1$ consists of unitary flat line bundles, corresponding to $\mathbb{S}^1 \subset \mathbb{C}^*$. Furthermore, $c_1$ is non-trivial: for instance, the line bundle associated with an invariant divisor $D$ satisfies $\int_X c_1(\mathcal{O}_X(D)) \wedge \omega^{n-1} > 0$ for any Gauduchon metric $\omega$, so $c_1(\mathcal{O}_X(D)) \neq 0$.

It follows that $c_1$ factors through a non-trivial Lie group homomorphism on the quotient:
\[
\operatorname{Pic}(X)/\mathbb{S}^1 \cong \mathbb{C}^*/\mathbb{S}^1 \cong \mathbb{R}_{>0} \longrightarrow H^{1,1}_{\mathrm{BC}}(X,\mathbb{R}).
\]
Since $\mathbb{R}_{>0}$ (under multiplication) is a 1-dimensional connected Lie group, its image under $c_1$ is a 1-dimensional real subspace in $H^{1,1}_{\mathrm{BC}}(X,\mathbb{R})$. Every continuous Lie group homomorphism from $(\mathbb{R}_{>0}, \times)$ into the additive Lie group underlying the real vector space is uniquely of the form $|\delta| \mapsto \log|\delta| \cdot v$ for some fixed non-zero vector $v \in H^{1,1}_{\mathrm{BC}}(X,\mathbb{R})$. Evaluating this map at $\delta_1$ and $\delta_2$ immediately yields Equation~\eqref{eq:chern-relation}.
\end{proof}
For completeness, we briefly recall the relevant positivity cones on a compact complex manifold (compare \cite[Definition 6.16]{Dem12}).
\begin{definition}
\label{cone-def}
Let $X$ be a compact complex manifold with a Hermitian metric $\omega$.
\begin{enumerate}
\item A class $\alpha\in H^{1,1}_{\mathrm{BC}}(X,\mathbb{R})$ is called nef if,
for every $\varepsilon>0$, there exists a smooth representative
$\alpha_\varepsilon\in\alpha$ such that
\[
\alpha_\varepsilon\geq -\varepsilon\omega .
\]
The nef cone is defined by
\[
\operatorname{Nef}(X)
:=
\left\{
\alpha\in H^{1,1}_{\mathrm{BC}}(X,\mathbb{R})
\mid
\alpha\ \text{is nef}
\right\}.
\]
\item A class $\alpha\in H^{1,1}_{\mathrm{BC}}(X,\mathbb{R})$ is called
pseudo-effective if there exists a closed positive $(1,1)$-current
$T$ such that
\[
[T]=\alpha .
\]
The pseudo-effective cone is
\[
\operatorname{Psef}(X)
:=
\left\{
\alpha\in H^{1,1}_{\mathrm{BC}}(X,\mathbb{R})
\mid
\alpha\ \text{is pseudo-effective}
\right\}.
\]
\item The effective cone is defined by
\[
\operatorname{Eff}(X)
:=
\left\{
\alpha\in H^{1,1}_{\mathrm{BC}}(X,\mathbb{R})
\mid
\alpha\ \text{is represented by an}\, \mathbb{R} \text{-effective divisor}
\right\}.
\]
\item The semi-positive cone is defined by
\[
\operatorname{SemiPos}(X)
:=
\left\{
\alpha\in H^{1,1}_{\mathrm{BC}}(X,\mathbb{R})
\mid
\alpha\ \text{admits a smooth semi-positive representative}
\right\}.
\]
\end{enumerate}
\end{definition}
By definition, we have
\[
\operatorname{SemiPos}(X)
\subset
\operatorname{Nef}(X)
\subset
\operatorname{Psef}(X).
\]

Recall that $H^{1,1}_{\mathrm{BC}}(X,\mathbb{R})$ is one-dimensional for every primary Hopf surface $X$ (see e.g. \cite[Theorem~4.2]{IO25}), and that $X$ always contains a nef elliptic curve $C$ with non-trivial first Chern class $c_1(\mathcal{O}_X(C)) \neq 0$.

By dimensional considerations, the nef cone $\operatorname{Nef}(X)$ coincides with the pseudo-effective cone $\operatorname{Psef}(X)$, both being generated by $c_1(\mathcal{O}_X(C))$ and thus isomorphic to a half-line $\mathbb{R}_{\ge 0}$. Indeed, $\operatorname{Psef}(X)$ cannot contain a full line since the pairing $\int_X T \wedge \omega > 0$ with any Gauduchon metric $\omega$ is strictly positive for non-zero positive currents $T$.

Furthermore, Proposition~\ref{prop:chern-relation} implies that the Bott--Chern first Chern class of any flat line bundle $L_\delta$ is a non-negative multiple of $c_1(\mathcal{O}_X(C))$ if and only if $|\delta| \le 1$. 
Since $c_1(L_\delta)$ is effective whenever $|\delta| \le 1$, every nef class is effective.
We thus obtain the explicit characterization
\[
\operatorname{Nef}(X) =\operatorname{Eff}(X)= \operatorname{Psef}(X) = \left\{ c_1(L_\delta) \;\middle|\; |\delta| \leq 1 \right\}.
\]

Regarding smooth semi-positivity, the structure depends fundamentally on whether the surface is diagonal or non-diagonal:

\begin{theorem}[Positivity Cones on Primary Hopf Surfaces]
\label{thm:hopf-surface-cones}
Let $X$ be a primary Hopf surface.
\begin{enumerate}
    \item $\operatorname{Nef}(X) =\operatorname{Eff}(X)= \operatorname{Psef}(X) \cong \mathbb{R}_{\ge 0}$, explicitly given by $\{ c_1(L_\delta) \mid |\delta| \le 1 \}$.
    \item If $X$ is diagonal, then every nef line bundle admits a smooth semi-positive Hermitian metric, so
    \[
    \operatorname{SemiPos}(X) = \operatorname{Nef}(X)=\operatorname{Eff}(X) = \operatorname{Psef}(X).
    \]
    \item If $X$ is non-diagonal, the Ueda obstruction class of $D$ prevents $\mathcal{O}_X(D)$ from admitting a smooth semi-positive metric, whence
    \[
    \operatorname{SemiPos}(X) = \{0\}.
    \]
\end{enumerate}
\end{theorem}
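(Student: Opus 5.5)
Part (1) is essentially the discussion preceding the statement, and I will assemble it in three steps. First, by \cite[Theorem~4.2]{IO25}, $H^{1,1}_{\mathrm{BC}}(X,\mathbb{R})$ is one-dimensional. Second, I will show that $\operatorname{Psef}(X)$ is a closed half-line. It contains no full line: if $T$ and $T'$ are closed positive currents with $[T]=-[T']$, then pairing with a Gauduchon metric $\omega$ gives $\int_X (T+T')\wedge\omega=0$. Since $\partial\bar\partial\omega=0$, this pairing is well defined on Bott--Chern classes, and positivity forces $T=T'=0$. Third, the invariant elliptic curve $D$ (the image of $\{z_2=0\}$, present in both cases) is effective and has $c_1(\mathcal O_X(D))\neq 0$, again by the Gauduchon pairing. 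I will check that it is nef: $\mathcal O_X(D)=L_\beta$ with $|\beta|<1$, and Proposition~\ref{prop:chern-relation} gives $c_1(L_\delta)=\frac{\log|\delta|}{\log|\beta|}c_1(L_\beta)$. Taking $|\delta|$ close to $1$, nefness of $D$ follows once some $L_\delta$ with $|\delta|<1$ admits representatives $\ge -\varepsilon\omega$; scaling then reduces this to the DPS94 nefness of $-K_X$, which is a positive multiple of $c_1(L_\beta)$. Hence $\operatorname{Nef}\subset\operatorname{Eff}\subset\operatorname{Psef}$ all contain the ray spanned by $c_1(L_\beta)$ and are contained in the half-line, so they coincide. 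Letting $|\delta|$ range over $(0,1]$, the formula above parametrizes exactly this ray as $\{c_1(L_\delta)\mid |\delta|\le1\}$.

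For (2), in the diagonal case I will invoke the Lemma above: $\mathcal O_X(D)$ carries a smooth semi-positive metric, so $c_1(\mathcal O_X(D))\in\operatorname{SemiPos}(X)$. Since $\operatorname{SemiPos}(X)$ is a convex cone, it contains the whole ray, which equals $\operatorname{Nef}(X)$ by (1). For the statement about line bundles, a nef line bundle is some $L_\delta$ with $|\delta|\le1$. I will write $|\delta|=|\beta|^t$ with $t\ge0$ and transport the DPS94 metric: the pullback metric on $\mathbb{C}^2\setminus\{0\}$ for $\mathcal O(D)$ is of the form $e^{-\varphi}$ with automorphy $\beta$, and $e^{-t\varphi}$ has automorphy of modulus $|\beta|^t$. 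Composing with a unitary twist then gives a semi-positive metric on $L_\delta$, because unitary flat bundles carry flat metrics.

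For (3), Proposition~\ref{prop:ueda-computation-summary} shows that no $L_\delta$ with $|\delta|\ne1$ admits a smooth semi-positive metric. To pass from metrics to classes, I will take a smooth semi-positive form $\theta\in c_1(L_\delta)$. Since $c_1$ is computed by curvature, $\theta=\Theta(L_\delta,h)+\sqrt{-1}\partial\bar\partial\psi$ for some metric $h$ and some smooth $\psi$, so $he^{-\psi}$ would be a smooth semi-positive metric, which is a contradiction. Every nonzero class on the ray is some $c_1(L_\delta)$ with $|\delta|<1$, and classes off the ray are not nef. Therefore $\operatorname{SemiPos}(X)=\{0\}$.

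The main obstacle I expect is the metric realization in (2) for arbitrary real exponents $t$, and the justification in (1) that $D$ is nef in the non-diagonal case. I would handle both via explicit $\gamma$-invariant potentials of the type used in DPS94, which are built from $\log$ of a weighted norm homogeneous under $\gamma$.
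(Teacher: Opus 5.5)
Your proposal is correct and follows essentially the same route as the paper. Both use the one-dimensionality of $H^{1,1}_{\mathrm{BC}}(X,\mathbb{R})$ from \cite{IO25}, the Gauduchon pairing to exclude a full line from $\operatorname{Psef}(X)$, nefness of the invariant curve (which you justify via $-K_X=(m+1)D$, resp.\ $C+D$, and DPS94), Proposition~\ref{prop:chern-relation} to parametrize the ray, the DPS94 metrics in the diagonal case, and Proposition~\ref{prop:ueda-computation-summary} together with the passage from semi-positive representatives to semi-positive metrics in the non-diagonal case.

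Two small remarks. First, the chain $\operatorname{Nef}\subset\operatorname{Eff}$ is not known a priori, but your sandwich argument only needs $\operatorname{Nef},\operatorname{Eff}\subset\operatorname{Psef}$. Second, the obstacles you anticipate are not real. The weight $e^{-t\varphi}$ already defines a metric on $L_\delta$ for every real $t\ge 0$, because the automorphy condition only involves $|\delta|$. Nefness of $D$ is already settled by your $-K_X$ argument.
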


%Recall that $H^{1,1}_{\mathrm{BC}}(X,\mathbb{R})$ is one-dimensional for every primary Hopf surface, and that there always exists a nef elliptic curve whose first Chern class is non-trivial. 
%By dimensional considerations, the nef cone coincides with the pseudo-effective cone, and both are generated by the class of this elliptic curve; in particular, they are isomorphic to a half-line.
%(Note that the pseudo-effective cone cannot contain a whole line, as can be seen by intersecting with a Gauduchon metric.)

%On the other hand, by Equation~\eqref{eq:chern-relation}, the first Chern class of any line bundle is a multiple of the first Chern class of the above elliptic curve.
%Consequently, we obtain an explicit description of the pseudo-effective cone as
%\[
%\left\{ c_1(L_\delta)\mid |\delta|\leq 1\right\}.
%\]
%Therefore, the intersection of the Néron--Severi cone with the nef cone is generated by the class of this elliptic curve and is isomorphic to a half-line.

%Since there exists a nef elliptic curve which is not semipositive
%on a non-diagonal Hopf surface, while there exists a non-Hermitian flat
%semipositive elliptic curve on a diagonal Hopf surface. Therefore, for a
%diagonal Hopf surface,
%\[
%\operatorname{SemiPos}(X)=\operatorname{Nef}(X),
%\]
%whereas for a non-diagonal Hopf surface,
%\[
%\operatorname{SemiPos}(X)=\{0\}.
%\]

\section{Positive cones on Hopf manifold}
We recall the following normal form for contracting automorphisms due to Reich
\cite{Rei,Reib}. 
This normal form can be applied to the contraction defining a Hopf
manifold.

\begin{lemma}[Normal form of Hopf manifolds]
\label{normal-form}
Let \(X=(\mathbb{C}^n\setminus\{0\})/\langle\gamma\rangle\) be a Hopf manifold,
where \(\gamma\) is a contraction. Then, after a holomorphic change of coordinates,
the generator \(\gamma\) can be written in the following form:
\begin{equation}\label{eq:1}
\begin{aligned}
z_1' &= \alpha_1 z_1,\\
z_2' &= z_1+\alpha_2 z_2,\\
&\ \vdots\\
z_{r_1}' &= z_{r_1-1}+\alpha_{r_1}z_{r_1},\\
z_{r_1+1}' &= \alpha_{r_1+1}z_{r_1+1}
+P_{r_1+1}(z_1,\ldots,z_{r_1}),\\
&\ \vdots\\
z_{r_1+r_2}' &= z_{r_1+r_2-1}
+\alpha_{r_1+r_2}z_{r_1+r_2}
+P_{r_1+r_2}(z_1,\ldots,z_{r_1}),\\
z_{r_1+r_2+1}' &=
\alpha_{r_1+r_2+1}z_{r_1+r_2+1}
+P_{r_1+r_2+1}(z_1,\ldots,z_{r_1+r_2}),\\
&\ \vdots\\
z_n' &=
z_{n-1}
+\alpha_n z_n
+P_n(z_1,\ldots,z_{r_1+\cdots+r_{\mu-1}}).
\end{aligned}
\tag{1}
\end{equation}
Here
\[
1>|\alpha_1|\geq\cdots\geq|\alpha_n|>0,
\]
and \(\mu\) denotes the number of Jordan blocks of the linear part of
\(\gamma\). For
\[
r_1+\cdots+r_s<j\leq r_1+\cdots+r_{s+1},
\]
the polynomial \(P_j\) is a finite sum of monomials
\(
z_1^{m_1}\cdots z_{r_s}^{m_{r_s}}
\)
satisfying the resonance condition
\[
\alpha_j=
\alpha_1^{m_1}\cdots\alpha_{r_s}^{m_{r_s}},
\qquad
m_1+\cdots+m_{r_s}\geq2.
\]
Moreover, some of the polynomials \(P_j\) may be identically zero. Since the
linear part is written in Jordan normal form, the eigenvalues are constant on
each Jordan block; in particular,
\[
\alpha_1=\cdots=\alpha_{r_1},
\]
and similarly for the remaining Jordan blocks.
\end{lemma}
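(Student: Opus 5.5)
This normal form is a standard result, so the plan combines the Poincaré--Dulac normalization with the contraction property of $\gamma$.

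\textbf{Step 1: linear part.} Since $\gamma$ is a contraction fixing the origin, it extends holomorphically across $0$ by Hartogs, and $0$ is an attracting fixed point. The eigenvalues of the linear part $A=d\gamma(0)$ therefore satisfy $0<|\alpha_j|<1$. A linear change of coordinates puts $A$ in (lower-triangular) Jordan form. We order the Jordan blocks so that the moduli of the eigenvalues are non-increasing. Within each block the eigenvalue is constant, and the subdiagonal entries $1$ give the terms $z_{j-1}$.

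\textbf{Step 2: resonances.} The eigenvalues lie in the Poincaré domain, since $0$ is not in the convex hull of the $\alpha_j$ (all satisfy $|\alpha_j|<1$). Consequently there are only finitely many resonances
\[
\alpha_j=\alpha_1^{m_1}\cdots\alpha_n^{m_n},\qquad |m|\ge 2.
\]
Indeed, $|\alpha^m|\le (\max_k|\alpha_k|)^{|m|}\to0$, while $|\alpha_j|$ is bounded below. The Poincaré--Dulac theorem, in its convergent form in the Poincaré domain, then gives a holomorphic change of coordinates near $0$ that conjugates $\gamma$ to $A$ plus a polynomial containing only resonant monomials. This step consists of solving the homological equations degree by degree. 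The small divisors $\alpha_j-\alpha^m$ are bounded away from zero in the non-resonant case, which yields convergence by a majorant argument.

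\textbf{Step 3: globalization.} The conjugating map $\phi$ is only defined near $0$. Because $\gamma$ is a contraction, every point is eventually mapped into the domain of $\phi$. We therefore extend $\phi$ by
\[
\Phi=N^{-k}\circ\phi\circ\gamma^k,
\]
where $N$ is the normal form. This is well defined and independent of $k$ thanks to the conjugacy relation, provided the polynomial map $N$ is invertible. Invertibility holds because $N$ is triangular with nonzero diagonal coefficients. The resulting $\Phi$ is a global biholomorphism of $\mathbb{C}^n$ fixing $0$, so $X$ is isomorphic to $(\mathbb{C}^n\setminus\{0\})/\langle N\rangle$.

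\textbf{Step 4: shape of the resonant terms.} We must check that resonant monomials only involve variables from earlier Jordan blocks, so that $P_j$ depends on $z_1,\dots,z_{r_1+\cdots+r_s}$. Suppose $\alpha_j=\alpha^m$ with $|m|\ge2$. Then
\[
|\alpha_j|=\prod_k|\alpha_k|^{m_k}<\min_{m_k>0}|\alpha_k|,
\]
so every variable $z_k$ appearing in the monomial has $|\alpha_k|>|\alpha_j|$. With our ordering, such a variable lies in a strictly earlier block, which forces the triangular form (1). Blocks with equal modulus but different eigenvalues are ordered arbitrarily. No resonance links them, because the strict inequality above excludes it.

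\textbf{Main obstacle.} I expect the main difficulty to be the convergence in Step 2 together with the precise indexing bookkeeping in Step 4. The paper sidesteps this by citing Reich \cite{Rei,Reib}, and I would do the same for the convergence statement.
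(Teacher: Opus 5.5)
Your outline is correct and is essentially the classical Poincar\'e--Dulac argument with Sternberg-type globalization that underlies Reich's theorem \cite{Rei,Reib}; citing that theorem is all the paper does, since it gives no proof of its own. Two small repairs are needed: in Step 2 the parenthetical justification is wrong, because for maps the Poincar\'e-domain condition is simply $|\alpha_j|<1$ for all $j$, and this does not place $0$ outside the convex hull of the $\alpha_j$ (e.g.\ $\alpha_1=\tfrac12$, $\alpha_2=-\tfrac12$), although your finiteness and small-divisor estimates only use $|\alpha_j|<1$; and in Step 3 surjectivity of $\Phi$ requires that $N$ itself attracts every point of $\mathbb{C}^n$ (otherwise $\Phi$ only maps onto the basin of $0$ for $N$), which follows from the triangular shape established in Step 4 by induction over the blocks, so that step should come first.
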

The results in \cite{Rei, Reib} establish a formal and analytic classification of biholomorphic contractions with an attracting fixed point in arbitrary dimensions. In the context of Hopf manifolds, Reich's normal form serves as the natural higher-dimensional analog of the classical Poincaré--Dulac normal form for surfaces, systematically accounting for resonant non-diagonal linear parts and Jordan blocks in dimension $n \ge 2$.

We first establish the following restriction property for the Bott--Chern cohomology of primary Hopf manifolds.
The existence of this flag may be viewed as a higher-dimensional generalization of Kodaira's result \cite{Koi66} on the existence of an elliptic curve on a primary Hopf surface.
\begin{lemma}
\label{bc-isom-lemma}
Let \(X\) be a primary Hopf manifold of dimension \(n\) with normal form as in Lemma \ref{normal-form}, and let
\[
Z_{n-2}\subset X
\]
be the Hopf surface in the natural flag
\[
X=Z_0\supset Z_1\supset\cdots\supset Z_{n-1},
\]
where
\[
Z_i:=\{z_1=\cdots=z_i=0\}.
\]
Then the restriction map
\[
\iota^*:H^{1,1}_{BC}(X,\mathbb{R})
\longrightarrow
H^{1,1}_{BC}(Z_{n-2},\mathbb{R})
\]
induced by the inclusion
\(\iota:Z_{n-2}\hookrightarrow X\) is an isomorphism.
\end{lemma}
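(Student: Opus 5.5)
We will show that both spaces are one-dimensional and that $\iota^*$ is nonzero. The plan has three steps.

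\textbf{Step 1: dimensions.} First we recall from \cite{IO25} the dimension of $H^{1,1}_{BC}$ for primary Hopf manifolds.
\begin{itemize}
\item For $n\ge 2$ one has $H^{1,1}_{BC}(X,\mathbb{R})\cong\mathbb{R}$; the $n=2$ case is \cite[Theorem~4.2]{IO25}, quoted above. Heuristically this holds because $b_2(X)=0$ (as $X\simeq S^1\times S^{2n-1}$), $h^{0,1}=1$, and the Bott--Chern group is controlled by the $\ddbar$-defect coming from $H^{0,1}$.
\item The submanifold $Z_{n-2}=\{z_1=\cdots=z_{n-2}=0\}$ is invariant under $\gamma$, by the triangular shape of the normal form in Lemma \ref{normal-form}: each $z_j'$ depends only on $z_1,\dots,z_j$. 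So $\gamma$ restricts to a contraction of $\mathbb{C}^2\setminus\{0\}$ in the coordinates $(z_{n-1},z_n)$, and $Z_{n-2}$ is a primary Hopf surface.
\item Hence $H^{1,1}_{BC}(Z_{n-2},\mathbb{R})\cong\mathbb{R}$ as well.
\end{itemize}
With both spaces one-dimensional, it suffices to exhibit one class whose restriction is nonzero.

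\textbf{Step 2: producing the class.} We use Proposition \ref{prop:chern-relation}. Take a flat line bundle $L_\delta$ on $X$ with $|\delta|\neq1$. Its restriction $\iota^*L_\delta$ is the flat bundle on $Z_{n-2}$ attached to the same character $\delta$, because $\pi_1(Z_{n-2})\to\pi_1(X)\cong\mathbb{Z}$ is an isomorphism: the generator $\gamma$ of the deck group restricts to the generator. Bott--Chern first Chern classes are functorial, so
\[
\iota^*c_1(L_\delta)=c_1(\iota^*L_\delta)=c_1(L_\delta|_{Z_{n-2}}).
\]

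\textbf{Step 3: nonvanishing, the main obstacle.} We must show that $c_1(L_\delta|_{Z_{n-2}})\neq0$ in $H^{1,1}_{BC}(Z_{n-2},\mathbb{R})$. This is precisely where the kernel of $c_1$ on the surface has to be identified with the unitary bundles, as asserted in the proof of Proposition \ref{prop:chern-relation}. To argue it directly, let $\omega$ be a Gauduchon metric on $Z_{n-2}$ and suppose $c_1(L_\delta)=i\ddbar\varphi$ on the surface. Then $L_\delta$ carries a metric $h$ with zero curvature, so $\log|s|_h$-type considerations make $L_\delta$ Hermitian flat with a holonomy of modulus one. The character of such a flat structure differs from $\delta$ by a factor $e^{\mathrm{Re}}$ of a holomorphic automorphy factor, and it is forced to have modulus $|\delta|$. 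This contradicts $|\delta|\ne1$.

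A cleaner route to Step 3 is positivity. Replacing $\delta$ by $\delta^{-1}$ if necessary, we can take $L_\delta=\mathcal O(D)$ for an invariant elliptic curve $D\subset Z_{n-2}$, up to a power. Then $\int_{Z_{n-2}}c_1(\mathcal O(D))\wedge\omega=\int_D\omega>0$, since $\ddbar\omega=0$ makes this pairing well defined on Bott--Chern classes. This gives injectivity of $\iota^*$, and hence the isomorphism by the equality of dimensions.
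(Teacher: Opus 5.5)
Your proposal is correct and is essentially the paper's proof. Both groups are one-dimensional by the cited Bott--Chern computations, and the flat bundle on $X$ carrying the character of $\mathcal{O}_{Z_{n-2}}(Z_{n-1})$, identified via $\pi_1(Z_{n-2})\cong\pi_1(X)$, restricts to a nonzero class; your Gauduchon pairing $\int_{Z_{n-2}} c_1(\mathcal{O}(D))\wedge\omega=\int_D\omega>0$ justifies the nonvanishing, which the paper leaves implicit here. You should phrase Step 3 by choosing $\delta$ to be exactly that character rather than ``$\mathcal{O}(D)$ up to a power'', and drop the first, hand-wavy argument in Step 3, which is not needed.
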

\begin{proof}
By \cite[Theorem~4.2]{IO25}, together with
\cite[Theorem~3.1, Corollaries~5.2 and~5.4]{IO23}, we have
\[
\dim_{\mathbb R}H^{1,1}_{BC}(X,\mathbb R)=1
\]
for every primary Hopf manifold. Since \(Z_{n-2}\) is a primary Hopf surface, the
same result gives
\[
\dim_{\mathbb R}H^{1,1}_{BC}(Z_{n-2},\mathbb R)=1.
\]

It remains to show that \(\iota^*\) is nonzero. The submanifold
\(Z_{n-1}\subset Z_{n-2}\) is an elliptic curve, and the line bundle
\(
\mathcal O_{Z_{n-2}}(Z_{n-1})
\)
is nef and flat. Hence it is defined by a character
\[
\rho:\pi_1(Z_{n-2})\longrightarrow\mathbb C^* .
\]
The inclusion \(Z_{n-2}\hookrightarrow X\) induces an identification
\[
\pi_1(Z_{n-2})\simeq \pi_1(X)\simeq\mathbb Z .
\]
Therefore the same character defines a flat line bundle \(L\) on \(X\).
By construction,
\[
\iota^*c_1(L)
=
c_1(\mathcal O_{Z_{n-2}}(Z_{n-1})).
\]
The class on the right-hand side is nonzero, and hence
\[
\iota^*:H^{1,1}_{BC}(X,\mathbb R)
\longrightarrow
H^{1,1}_{BC}(Z_{n-2},\mathbb R)
\]
is nonzero. Since both vector spaces are one-dimensional, \(\iota^*\) is an
isomorphism.
\end{proof}

The following proposition describes the nef, pseudo-effective, and semi-positive cones of
a primary Hopf manifold in terms of those of the Hopf surface appearing in its natural
flag.
\begin{proposition}
\label{cone-hopf-mfd}
Let \(X\) be a primary Hopf manifold with normal form as in Lemma \ref{normal-form}. Then the restriction map
\[
\iota^*:H^{1,1}_{\mathrm{BC}}(X,\mathbb{R})
\longrightarrow
H^{1,1}_{\mathrm{BC}}(Z_{n-2},\mathbb{R})
\]
identifies the nef cone, the effective cone and the pseudo-effective cone of \(X\) with those of the
Hopf surface \(Z_{n-2}\). Moreover, 
the semi-positive cone of \(X\) is determined by whether \(Z_{n-2}\) is a
diagonal or non-diagonal Hopf surface.
\end{proposition}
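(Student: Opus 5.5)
The plan is to use Lemma~\ref{bc-isom-lemma}: $\iota^*$ is an isomorphism of one-dimensional real spaces, so it suffices to compare the cones on both sides. Each cone is a closed half-line, the whole line, or $\{0\}$. Let $L$ be the flat line bundle on $X$ built in the proof of Lemma~\ref{bc-isom-lemma} from the character $\rho$ of $\mathcal O_{Z_{n-2}}(Z_{n-1})$. Then $\iota^*c_1(L)=c_1(\mathcal O_{Z_{n-2}}(Z_{n-1}))$, and this class generates $\operatorname{Nef}=\operatorname{Eff}=\operatorname{Psef}$ of $Z_{n-2}$ by Theorem~\ref{thm:hopf-surface-cones}(1).

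\textbf{Nef, effective and pseudo-effective cones.} First, $\operatorname{Psef}(X)$ contains no full line. Indeed, a Gauduchon metric $\omega$ on $X$ gives $\int_X T\wedge\omega^{n-1}>0$ for every nonzero closed positive current $T$, and this pairing is well defined on $H^{1,1}_{BC}$. Second, I claim $c_1(L)$ is effective. The character $\rho$ sends the generator to $\delta_0$ with $|\delta_0|<1$; for a non-diagonal $Z_{n-2}$ it is the relevant eigenvalue, e.g.\ $\beta$. The hypersurface $Z_1=\{z_1=0\}$ is $\gamma$-invariant because $z_1'=\alpha_1 z_1$, and $\mathcal O_X(Z_1)=L_{\alpha_1^{-1}}$ or $L_{\alpha_1}$ depending on convention. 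Proposition~\ref{prop:chern-relation} then shows that $c_1(L_\delta)$ is a nonnegative multiple of $c_1(\mathcal O_X(Z_1))$ exactly when $|\delta|\le 1$. Hence $c_1(L)\in\operatorname{Eff}(X)\subset\operatorname{Psef}(X)$, and $\operatorname{Psef}(X)$ is the half-line spanned by $c_1(L)$. It equals $\operatorname{Eff}(X)$, and it maps under $\iota^*$ onto $\operatorname{Psef}(Z_{n-2})$.

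For nefness, I would use Proposition~\ref{prop:chern-relation} to write the generator as $c_1(\mathcal O_X(Z_1))$ up to a positive scalar. I would then show this class is nef by a DPS-type metric on the invariant hypersurface $Z_1$: take the weight $h=|z_1|^{2}/\|z\|^{2\kappa}$, or a $\gamma$-invariant norm, and use the contraction to obtain curvature $\ge -\varepsilon\omega$, following \cite[Proposition~6.4]{DPS94}. An alternative is to note that $\iota^*$ maps nef classes to nef classes and that the psef half-line is the only candidate. In either case $\operatorname{Nef}(X)=\operatorname{Psef}(X)=\operatorname{Eff}(X)$ corresponds under $\iota^*$ to the surface cones.

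\textbf{Semi-positive cone.} Restriction preserves smooth semi-positive representatives, so $\iota^*\operatorname{SemiPos}(X)\subset\operatorname{SemiPos}(Z_{n-2})$. If $Z_{n-2}$ is non-diagonal, Theorem~\ref{thm:hopf-surface-cones}(3) gives $\operatorname{SemiPos}(Z_{n-2})=\{0\}$, so $\operatorname{SemiPos}(X)=\{0\}$. If $Z_{n-2}$ is diagonal, we must produce a semi-positive metric on some $L_\delta$ with $|\delta|<1$ on $X$ itself. I expect this to be the main obstacle, since semi-positivity of a restriction does not lift in general. The first attempt will be the DPS construction on $\mathcal O_X(Z_1)$ with a metric of the form $|z_1|^2/\phi(z)$, where $\phi$ is a $\gamma$-equivariant psh exhaustion. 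If that fails because of Jordan blocks or resonant terms, I would instead use the flat-bundle description: choose $h=\|z\|_{\gamma}^{2s}$ for a suitable $\gamma$-adapted plurisubharmonic norm. The resulting statement is that $\operatorname{SemiPos}(X)$ is determined by the type of $Z_{n-2}$: in the diagonal case it is identified with $\operatorname{SemiPos}(Z_{n-2})=\operatorname{Nef}(Z_{n-2})$ whenever such a metric exists on $X$, and in the non-diagonal case it is $\{0\}$.
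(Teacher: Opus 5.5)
\textbf{Verdict.} Your handling of the effective and pseudo-effective cones matches the paper: the Gauduchon pairing rules out a full line, $H^{1,1}_{BC}$ is one-dimensional, and Proposition~\ref{prop:chern-relation} puts $c_1(L)$ on the ray of $c_1(\mathcal O_X(Z_1))$. Your non-diagonal semi-positive case also matches the paper. There is one genuine gap (nefness), and your diagonal case cannot be completed because it is false in general.

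\textbf{The gap: nefness.} Neither of your routes shows $\operatorname{Nef}(X)\neq\{0\}$. The ``alternative'' gives only $\iota^*\operatorname{Nef}(X)\subset\operatorname{Nef}(Z_{n-2})$ and $\operatorname{Nef}(X)\subset\operatorname{Psef}(X)$. Both are consistent with $\operatorname{Nef}(X)=\{0\}$, since nefness does not lift from $Z_{n-2}$ to $X$. The DPS-type metric is not carried out. The weight $|z_1|^2/\|z\|^{2\kappa}$ does not satisfy the $\gamma$-transformation law once Jordan blocks or resonant terms $P_j$ appear, and \cite[Proposition~6.4]{DPS94} concerns surfaces.

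The missing idea is P\u{a}un's criterion (Theorem~\ref{paun-thm}) applied to $T=[Z_1]$, as in Remark~\ref{rem: Paun}:
\begin{enumerate}
\item The Lelong numbers of $T$ are positive exactly on the smooth hypersurface $Z_1$. So $\{[Z_1]\}$ is nef if and only if $c_1(N_{Z_1/X})$ is nef on $Z_1$.
\item $N_{Z_1/X}$ is flat on the Hopf manifold $Z_1$, so by Proposition~\ref{prop:chern-relation} its class is a positive multiple of $c_1(\mathcal O_{Z_1}(Z_2))$.
\item Induct down the flag. The last step lands on the elliptic curve $Z_{n-1}$, where the class of a flat bundle is zero and hence nef.
\end{enumerate}
The paper's proof of this proposition also passes from ``$\operatorname{Psef}$ contains no line'' straight to ``$\operatorname{Psef}=\operatorname{Nef}$''. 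It tacitly relies on this same input.

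\textbf{The diagonal semi-positive case.} You are right to stop here. Restriction only yields $\iota^*\operatorname{SemiPos}(X)\subset\operatorname{SemiPos}(Z_{n-2})$. The paper's proof calls the isomorphism ``compatible with the semi-positive cones'' but establishes no more than this inclusion.

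In fact the reverse inclusion fails, so no metric construction can finish your diagonal case in general. Take $\gamma(z_1,z_2,z_3)=(\alpha z_1,\,z_1+\alpha z_2,\,\alpha_3 z_3)$ with $0<|\alpha_3|<|\alpha|<1$. This is in the normal form of Lemma~\ref{normal-form} with $r_1=2$, $r_2=1$, $P_3=0$.
\begin{enumerate}
\item $Z_{n-2}=Z_1$ is the diagonal surface $(z_2,z_3)\mapsto(\alpha z_2,\alpha_3 z_3)$.
\item $S=\{z_3=0\}$ is an invariant non-diagonal surface $(z_1,z_2)\mapsto(\alpha z_1,z_1+\alpha z_2)$, the case $m=1$.
\item Since $\pi_1(S)\cong\pi_1(X)$, restriction sends $c_1(L_\delta)$ to $c_1(L_\delta|_S)\neq 0$ for $|\delta|\neq 1$. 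So restriction is injective on $H^{1,1}_{BC}(X,\R)$.
\item Theorem~\ref{thm:hopf-surface-cones}(3) applied to $S$ then gives $\operatorname{SemiPos}(X)=\{0\}$. This holds although $Z_{n-2}$ is diagonal and $\operatorname{Nef}(X)$ is a half-line.
\end{enumerate}
So restriction can prove only the implication you gave: if $Z_{n-2}$ is non-diagonal, or more generally any invariant sub-Hopf surface is, then $\operatorname{SemiPos}(X)=\{0\}$. You should drop your closing claim that the type of $Z_{n-2}$ determines the cone, rather than hedge it. With the hedge ``whenever such a metric exists'' it is vacuous anyway.
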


\begin{proof}
Since the restriction of a semi-positive representative remains semi-positive,
and the restriction of a nef class remains nef, the above isomorphism in Lemma \ref{bc-isom-lemma} is
compatible with the nef and semi-positive cones.

By \cite[Theorem~4.2]{IO25}, the Bott--Chern cohomology group
\(
H^{1,1}_{\mathrm{BC}}(X,\mathbb{R})
\)
is one-dimensional.
(In particular, as one-dimensional real vector spaces, the relevant nonzero cones are half-lines, and preservation of nefness/semipositivity determines the orientation.)
Furthermore, the pseudo-effective cone cannot contain a
line, since the intersection with a Gauduchon metric is nonnegative on the
pseudo-effective cone and strictly positive for any nonzero pseudo-effective
class.
Therefore, in the one-dimensional space
\(H^{1,1}_{\mathrm{BC}}(X,\mathbb{R})\), the pseudo-effective cone coincides with
the nef cone.
By Proposition \ref{prop:chern-relation}, any pseudo-effective class contains some non-negative multiple of $[Z_1]$.

Consequently, the nef and semi-positive cones of \(X\) are completely determined
by the corresponding cones on the Hopf surface \(Z_{n-2}\). The latter are
known according to whether \(Z_{n-2}\) is diagonal or non-diagonal. This gives
the desired characterization of nef line bundles on \(X\) which do not admit
semi-positive representatives.
\end{proof}

We recall the following characterization of nef classes due to P\u{a}un \cite[Theorem~1.C.2]{Paun98}.
We emphasize that the proof is constructive.
\begin{theorem}
\label{paun-thm}
Let $T$ be a closed positive $(1,1)$-current on a compact complex
manifold $X$. Then the Bott--Chern cohomology class $\{T\}$ is nef if and
only if the restriction $\{T\}|_Z$ is nef for every irreducible analytic
subset
\[
Z\subset \bigcup_{c>0} E_c(T)
\]
where \[
E_c(T):=\{x\in X\mid \nu(T,x)\ge c\},
\]
and $\nu(T,x)$ denotes the Lelong number of $T$ at $x$.

In particular, if $\bigcup_{c>0} E_c(T)$ is a smooth submanifold, $\{T\}$ is nef if and
only if the restriction $\{T\}|_{E_c(T)}$ is nef. 
\end{theorem}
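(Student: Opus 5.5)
The plan is to prove Păun's theorem (Theorem \ref{paun-thm}) by regularizing the current and controlling the error near the Lelong locus. One direction is immediate: restrictions of nef classes are nef. If $\{T\}$ is nef, choose smooth representatives $\alpha_\varepsilon\in\{T\}$ with $\alpha_\varepsilon\ge-\varepsilon\omega$. Then $\alpha_\varepsilon|_Z$ represents $\{T\}|_Z$ and satisfies $\alpha_\varepsilon|_Z\ge -\varepsilon\,\omega|_Z$. For singular $Z$, one works on a desingularization, or with the definition of nefness on analytic spaces via local potentials.

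For the converse, I would argue by induction on the dimension of the Lelong locus, using Demailly's regularization. Write $T=\theta+\sqrt{-1}\partial\overline{\partial}\varphi$ with $\theta$ smooth. By Demailly's approximation theorem, for each $c>0$ there exists $\varphi_c$, smooth outside $E_c(T)$ and with analytic singularities along $E_c(T)$, such that
\[
\theta+\sqrt{-1}\partial\overline{\partial}\varphi_c\ge -c\,\omega .
\]
So away from $E_c(T)$ we already have an almost-positive smooth representative. By Siu's theorem, $E_c(T)$ is analytic, and for fixed $c$ it has finitely many irreducible components $Z_j$. By hypothesis, each $\{T\}|_{Z_j}$ is nef. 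So on $Z_j$ there are smooth potentials $\psi_{j,\varepsilon}$ with $\theta|_{Z_j}+\sqrt{-1}\partial\overline{\partial}\psi_{j,\varepsilon}\ge-\varepsilon\omega$.

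The next step is to extend these to a neighbourhood $V$ of $E_c(T)$. First extend $\psi_{j,\varepsilon}$ smoothly and add $A\,d(\cdot,Z_j)^2$-type terms, which gives a potential $\tilde\psi$ on a small tube. The extension loses at most $2\varepsilon\omega$ of positivity there, because second derivatives in normal directions can be made large and positive. Where components intersect, and in the singular strata, one proceeds by induction on dimension, starting from the lowest-dimensional strata. This local extension along possibly singular $Z$ is the main obstacle. I would handle it via a stratification and Demailly's local extension lemma for quasi-psh functions from subvarieties, using the Stein property of small neighbourhoods.

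Finally, glue the two potentials with a regularized maximum: set $\Phi=\max_{\mathrm{reg}}(\varphi_c,\ \tilde\psi - C)$. Near $E_c(T)$, $\varphi_c\to-\infty$, so $\Phi=\tilde\psi-C$. Outside $V$, choose $C$ large so that $\Phi=\varphi_c$. In the transition region, the regularized max of two functions that are each $(-2\varepsilon\omega)$-psh is again $(-2\varepsilon\omega)$-psh. Then $\theta+\sqrt{-1}\partial\overline{\partial}\Phi$ is smooth, lies in $\{T\}$, and is $\ge-(c+2\varepsilon)\omega$. Letting $c,\varepsilon\to0$ gives nefness. The smooth-submanifold case follows, since $E_c(T)$ is then a single smooth locus and the extension step is elementary via tubular neighbourhoods.
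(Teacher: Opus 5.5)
\textbf{Verdict: genuine gap at the crux.} The paper gives no proof of this statement; it only quotes P\u{a}un. Your outer architecture matches P\u{a}un's argument: Demailly regularization, an almost-positive potential near the Lelong set, and gluing by a regularized maximum. The forward implication and the final gluing are fine. The gap is the step you call ``the main obstacle''. That step is the real content of the theorem, and your sketch of it does not work as stated, for two reasons.

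First, the potentials $\psi_{j,\varepsilon}$ given by nefness on different components $Z_j$ are unrelated on $Z_j\cap Z_k$. So there is no single function on $E_c(T)$ to extend.

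Second, even a single potential that is almost positive along the branches of a singular set may admit no almost-positive smooth extension. Take $Z=L_1\cup L_2\cup L_3\subset\C^2$, the lines spanned by $e_1,e_2,e_1+e_2$, and the constant form $\beta$ with matrix $\left(\begin{smallmatrix}1&5\\5&1\end{smallmatrix}\right)$. It is positive on each $L_i$. A smooth $h$ vanishing on $Z$ has zero $1$-jet at $0$, and its quadratic part vanishes on the three lines. This forces $\sqrt{-1}\partial\overline{\partial}h(0)=\left(\begin{smallmatrix}0&\sqrt{-1}s\\-\sqrt{-1}s&0\end{smallmatrix}\right)$ for some $s\in\R$. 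Hence $\beta+\sqrt{-1}\partial\overline{\partial}h$ has an eigenvalue $1-\sqrt{25+s^2}\le-4$ at $0$, whatever extension you choose. The same happens at an ordinary triple point of an irreducible curve.

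So the $A\,d(\cdot,Z_j)^2$ device, which needs $Z_j$ smooth, cannot be pushed through singular points. Demailly-type extension lemmas do not help directly either: they take as input a function that is already, near every point, the restriction of an ambient (strictly) psh function, and that is exactly what is lacking.

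You mention a stratified induction, but not what it proves or how it reconciles the pieces. The missing idea is to \emph{replace} the potentials near the lower-dimensional strata before extending. Prove, by induction on the dimension of $Y$, the following statement. If $Y\subset X$ is compact analytic and $\{\alpha\}|_Z$ is nef for every irreducible analytic $Z\subset Y$, then for each $\varepsilon>0$ there is a smooth $\tilde\psi$ on a neighbourhood of $Y$ with $\alpha+\sqrt{-1}\partial\overline{\partial}\tilde\psi\ge-\varepsilon\omega$.

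The inductive step runs as follows.
\begin{enumerate}
\item Put $S=Y_{\mathrm{sing}}$, which contains the singular loci of the components and their mutual intersections. By induction there is such a $\tilde\psi_S$ near $S$.
\item Choose $\gamma_S$ quasi-psh, smooth off $S$, with logarithmic poles along $S$.
\item On each component, replace $\psi_j$ by the regularized maximum of $\psi_j+\eta\gamma_S$ and $\tilde\psi_S-M$, with $M\gg0$.
\item These functions all coincide with $\tilde\psi_S-M$ near $S$, and the sets $Z_j\setminus S$ are disjoint manifolds. So they define one function on $Y$ that is locally the restriction of an ambient almost-positive function: near $S$ by construction, and elsewhere by your tubular argument.
\item Only now can the local extensions, which agree on $Y$, be glued by a regularized maximum of $\tilde\psi_i+\eta\theta_i$ with cut-off functions $\theta_i$.
\end{enumerate}

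Two smaller corrections. First, $\varphi_c$ is $-\infty$ only on its pole set. That set lies in some $E_{c'}(T)$ but need not be all of $E_c(T)$. So organize the gluing around the pole set, using the analytic-singularity version so that $\varphi_c$ is smooth off it. Second, in the ``in particular'' case, $E_c(T)$ need not be smooth even if $\bigcup_{c>0}E_c(T)$ is. So you should extend from the submanifold $\bigcup_{c>0}E_c(T)$ itself.
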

If one is only concerned with the Néron--Severi part of the nef cone,
without appealing to the computation of
$\dim H^{1,1}_{\mathrm{BC}}(X,\mathbb{R})$, then it can be described by
means of Theorem~\ref{paun-thm}.
\begin{remark}
\label{rem: Paun}
Let $\alpha_i$ denote the constant corresponding to the flat line bundle
$\mathcal{O}_{Z_i}(Z_{i+1})$ for $0\leq i\leq n-2$, where the flat line
bundles on the submanifolds $Z_i$ are identified via the isomorphisms of
their fundamental groups induced by the inclusions. By construction,
$|\alpha_i|<1$ for every $i$.

By Equation~\eqref{eq:chern-relation}, the first Chern classes
$c_1(L_{\alpha_i})$ differ by positive scalar multiples. Consider the
line bundle $\mathcal{O}_X(Z_1)$, whose first Chern class contains the
current of integration along $Z_1$. By Theorem~\ref{paun-thm} and
induction on the dimension, $\mathcal{O}_X(Z_1)$ is nef, since
$\mathcal{O}_{Z_1}(Z_2)$ is nef by the induction hypothesis.
Consequently, Equation~\eqref{eq:chern-relation} shows that
$\mathcal{O}_X(Z_1)$ generates the Néron--Severi part of the nef cone.
\end{remark}

We recall the following classical lemma and include a brief proof for the convenience of the reader.
\begin{lemma}\label{lem:BC-Galois}
Let $\pi:Y\to X$ be a finite \'etale Galois cover with Galois group $G$.
Then the pull-back induces an isomorphism
\[
\pi^*:
H^{p,q}_{\mathrm{BC}}(X,\mathbb{C})
\xrightarrow{\;\cong\;}
H^{p,q}_{\mathrm{BC}}(Y,\mathbb{C})^G,
\]
where $H^{p,q}_{\mathrm{BC}}(Y,\mathbb{C})^G$ denotes the $G$-invariant subspace.
\end{lemma}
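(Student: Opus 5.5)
The plan is to show that $\pi^*$ lands in the $G$-invariant subspace, and then to build an inverse out of the averaging operator over $G$ followed by push-forward (descent) of invariant forms.

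\emph{Well-definedness and invariance.} Pull-back by a holomorphic map commutes with $\partial$, $\overline{\partial}$ and $d$. Hence $\pi^*$ sends $d$-closed $(p,q)$-forms to $d$-closed $(p,q)$-forms, and it sends $\sqrt{-1}\partial\overline{\partial}$-exact forms to $\sqrt{-1}\partial\overline{\partial}$-exact forms (for general $(p,q)$, forms $\partial\overline{\partial}\eta$). Thus $\pi^*$ induces a map on Bott--Chern cohomology. Since $\pi\circ g=\pi$ for every $g\in G$, we have $g^*\pi^*=\pi^*$, so the image lies in $H^{p,q}_{\mathrm{BC}}(Y,\mathbb{C})^G$.

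\emph{Descent of invariant forms.} Because $\pi$ is a finite étale Galois cover, it is a local biholomorphism, and $X=Y/G$ with $G$ acting freely. A smooth $G$-invariant form $\beta$ on $Y$ is therefore exactly $\pi^*\gamma$ for a unique smooth form $\gamma$ on $X$. To see this, define $\gamma$ locally on evenly covered open sets through any local section of $\pi$; invariance makes the choice of section irrelevant. Since $\pi$ is a local biholomorphism, $\gamma$ has the same bidegree as $\beta$, and $d\gamma=0$ if and only if $d\beta=0$. The same descent applies to functions and to $(p-1,q-1)$-forms.

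\emph{Surjectivity.} Let $[\beta]$ be a $G$-invariant class, so $g^*\beta-\beta=\partial\overline{\partial}\eta_g$ for each $g$. Replace $\beta$ by its average $\tilde\beta=\frac{1}{|G|}\sum_{g}g^*\beta$. This form is $d$-closed, $G$-invariant, and cohomologous to $\beta$, because
\[
\tilde\beta-\beta=\partial\overline{\partial}\Bigl(\frac{1}{|G|}\sum_g \eta_g\Bigr).
\]
By descent, $\tilde\beta=\pi^*\gamma$ with $\gamma$ closed, so $[\beta]=\pi^*[\gamma]$.

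\emph{Injectivity.} Suppose $\pi^*\gamma=\partial\overline{\partial}\eta$ on $Y$. Averaging gives $\pi^*\gamma=\partial\overline{\partial}\tilde\eta$ with $\tilde\eta=\frac{1}{|G|}\sum_g g^*\eta$, because $\pi^*\gamma$ is invariant and $g^*$ commutes with $\partial\overline{\partial}$. Now $\tilde\eta$ is invariant, so it descends: $\tilde\eta=\pi^*\zeta$. Then
\[
\pi^*(\gamma-\partial\overline{\partial}\zeta)=0,
\]
and since $\pi$ is a surjective local diffeomorphism, $\gamma=\partial\overline{\partial}\zeta$. The real structure is compatible with all of these steps, so the same argument gives the statement for real Bott--Chern groups.

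The main subtlety is making the descent statement precise: $\pi$ étale means it is a local biholomorphism, so bidegrees are preserved, and $G$ acting simply transitively on fibres is what lets invariant forms descend. Once that is in place, the rest is formal averaging, which uses only that $|G|$ is invertible in $\mathbb{C}$.
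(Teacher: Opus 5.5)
Your proof is correct and follows essentially the paper's argument. The paper packages the same averaging idea into the identities $\pi_*\pi^*=(\deg\pi)\,\mathrm{id}$ and $\pi^*\pi_*=\sum_{g\in G}g^*$ on Bott--Chern cohomology; you unpack it at the level of forms, where your descent of $G$-invariant forms plays the role of the push-forward $\pi_*$.
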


\begin{proof}
Since $\pi$ is finite and \'etale, the pull-back and push-forward of differential forms commute with $\partial$ and $\bar\partial$, and hence induce morphisms
\[
\pi^*:H^{p,q}_{\mathrm{BC}}(X,\mathbb{C})
\longrightarrow
H^{p,q}_{\mathrm{BC}}(Y,\mathbb{C}),
\qquad
\pi_*:H^{p,q}_{\mathrm{BC}}(Y,\mathbb{C})
\longrightarrow
H^{p,q}_{\mathrm{BC}}(X,\mathbb{C}).
\]
Moreover,
\[
\pi_*\pi^*=(\deg\pi)\,\mathrm{id},
\qquad
\pi^*\pi_*=\sum_{g\in G}g^*.
\]
It follows that $\pi^*$ is injective and its image is precisely
$H^{p,q}_{\mathrm{BC}}(Y,\mathbb{C})^G$.
\end{proof}
Note that the cones defined in Definition~\ref{cone-def} are preserved under pull-back and push-forward under finite \'etale Galois covers.

Now we discuss secondary Hopf manifolds.

\begin{remark}
\label{rem: secondary-Hopf}
By definition, a secondary Hopf manifold is a finite \'etale Galois quotient of a primary Hopf manifold.
By Lemma~\ref{lem:BC-Galois}, the Bott--Chern cohomology group
$
H^{1,1}_{\mathrm{BC}}(X,\mathbb{C})
$
is either trivial or one-dimensional.
In the former case, all the cones in Definition~\ref{cone-def} are trivial.
In the latter case, the descriptions of the nef, pseudo-effective, effective and semi-positive cones are identical to those for primary Hopf manifolds.

Moreover, the semi-positive cone is determined by whether a (equivalently, any) finite \'etale Galois cover by a primary Hopf manifold contains a diagonal Hopf surface as a submanifold.
Indeed, this condition is independent of the choice of the finite \'etale Galois cover.
\end{remark}
For the convenience of the readers, we give a simpler proof of 
\cite[Article 2, Proposition 9.1]{Nee} following the ideas from 
\cite[Theorem 5.1]{KP90}.

\begin{proposition}
\label{prop-class}
Let $X$ be a minimal compact non-algebraic surface.
Assume that there exists an elliptic curve $D$ on $X$ such that some Ueda class is non-trivial.
Then $X$ is a Hopf surface.
\end{proposition}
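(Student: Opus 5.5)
We argue by contradiction with the Kodaira classification of minimal compact non-algebraic surfaces, following \cite[Theorem 5.1]{KP90}. First we note that a non-trivial Ueda class forces the normal bundle $N_{D/X}$ to be topologically trivial. Ueda theory is only defined when $N_{D/X}$ is flat of degree zero, so $D^2=0$. Moreover, if $N_{D/X}$ were torsion and $D$ had a neighborhood of finite type, then $D$ would move in a pencil. Thus we may assume that the Ueda type of $(D,X)$ is finite, say $u_m\neq 0$. By Ueda's theorem, $D$ then admits a strongly pseudoconcave neighborhood system. More precisely, there is a strictly plurisubharmonic exhaustion of $V\setminus D$ near $D$ that tends to $+\infty$ toward $D$. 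In particular, $D$ has no holomorphic functions in punctured neighborhoods other than constants, and $\mathcal O_X(D)$ is not semi-positive on any neighborhood of $D$.

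Next we run through the surface classes with $a(X)\le 1$. If $a(X)=1$, then $X$ is an elliptic fibration $f\colon X\to B$, and every curve with $D^2=0$ is contained in a fiber. So $D$ is a (multiple of a) fiber component, which by $D^2=0$ and minimality must be the support of a full fiber. The function $f$ (or a local coordinate on $B$ pulled back) then gives a defining function of $mD$ near $D$. This makes the neighborhood flat, so all Ueda classes vanish, which is a contradiction.

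If $a(X)=0$, the surface is a K3 surface, a torus, or of class VII. On a torus every curve is elliptic, a translate of a subtorus, and this forces $a(X)\ge1$; so a torus admits no elliptic curve at all. A K3 surface with $a=0$ carries no curve $D$ with $D^2=0$, since the existence of an elliptic curve gives an elliptic fibration by Riemann--Roch. Hence $X$ is of class VII$_0$. If $b_2>0$, Nakamura/Enoki-type arguments show that a curve with $D^2=0$ of arithmetic genus one forces $X$ to be an Enoki (parabolic Inoue) surface. There $D$ is a cycle of rational curves rather than a smooth elliptic curve, except in the degenerate case. That degenerate case is handled by Enoki's theorem, which shows the neighborhood is linearizable (Ueda type $\infty$), again a contradiction. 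If $b_2=0$, then by Bogomolov--Li--Yau--Zheng the surface is either Inoue, which contains no curves, or Hopf. This completes the argument.

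The main obstacle is the class VII case with $b_2>0$. The cleanest route I would try is Enoki's theorem: a class VII$_0$ surface with $b_2>0$ containing a curve $D$ with $D^2=0$ and topologically trivial normal bundle is an Enoki surface, and Enoki's explicit construction shows that $D$ (there an elliptic curve only when $n=0$ degenerates to a Hopf surface) has a flat, linearizable neighborhood. Combining this with Kato's observation that a non-linearizable neighborhood would provide a global spherical shell would settle the remaining cases.
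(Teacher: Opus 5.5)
Your argument is correct. Its skeleton is the paper's: the Ueda hypothesis gives $D^2=0$, then $a(X)\le 1$, tori and K3 surfaces are excluded, and class $\mathrm{VII}_0$ is split according to $b_2$. Three of the cases, however, are closed differently.

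\textbf{The case $a(X)=1$.} The paper plays Ueda's theorem (strong $1$-convexity of $X\setminus D$) against the proper fibration $X\setminus D\to B\setminus\{b\}$. You instead show directly that the Ueda type is infinite. By Zariski's lemma (not minimality) the fiber through $D$ is $mD$. Local $m$-th roots $w_j$ of $f^*t$ then satisfy $w_j=\zeta_{jk}w_k$ with $\zeta_{jk}^m=1$ on overlaps, so all Ueda classes vanish. This is more elementary, since it uses only the definition of Ueda classes rather than Ueda's Theorem 1. You still need to justify that $D$ is vertical, for example as the paper does: a multisection would give $(D+F)^2=2D\cdot F>0$, which makes $X$ projective.

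\textbf{The case $\mathrm{VII}_0$, $b_2>0$.} The paper combines Enoki's description of $X\setminus D$ with Ueda's psh function, a Grauert contraction, and a psh-extension argument on a compact normal surface. You instead read off from Enoki's theorem which curves the surface carries: the rational curves of the cycle, plus, in the parabolic Inoue case, one elliptic curve of self-intersection $-b_2$. Hence a smooth elliptic $D$ with $D^2=0$ cannot occur at all. The paper's route keeps Ueda's psh function as the uniform source of contradiction. Yours uses the hypothesis there only to get $D^2=0$ and needs no contraction or extension step. Consequently your ``degenerate case'' and the appeal to Kato and global spherical shells are unnecessary and should be dropped.

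\textbf{The case $\mathrm{VII}_0$, $b_2=0$.} You invoke Bogomolov's theorem (Li--Yau--Zheng, Teleman) together with the absence of curves on Inoue surfaces. The paper uses Kodaira's lighter theorem that a $\mathrm{VII}_0$ surface with $b_2=0$ containing a curve is Hopf.

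\textbf{Two false side remarks.} Neither is used, but both should be deleted.
\begin{itemize}
\item It is \emph{infinite} Ueda type with torsion normal bundle that makes $D$ a multiple fiber of a fibration near $D$, not finite type.
\item Finite type does not rule out non-constant holomorphic functions on $V\setminus D$. On the non-diagonal Hopf surface, $\exp(2\pi\sqrt{-1}\,\alpha z_1/z_2^m)$ is $\gamma$-invariant and descends to a non-constant holomorphic function on $X\setminus D$.
\end{itemize}
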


\begin{proof}
We prove this by considering the algebraic dimension $a(X)$ of $X$.
The non-triviality of some Ueda obstruction class implies by 
\cite[Theorem 1]{Ued83} that $X\setminus D$ is strongly 1-convex.

Assume first that $a(X)=1$. By the Kodaira--Enriques classification, 
$X$ admits an elliptic fibration
$
f:X\to C.
$
Note that
$
D^2=0
$
since the normal bundle of $D$ is Hermitian flat.
If $D$ is a multisection of $f$, then
\[
(D+F)^2=2(D\cdot F)>0,
\]
where $F$ is a general fiber of $f$.
By \cite[Theorem 8]{Koi66}, it would then be projective, a contradiction.
If $D$ is contained in a fiber of $f$, then $f$ induces a proper fibration from
$X\setminus D$ to an open Riemann surface.
This contradicts the strong 1-convexity of $X\setminus D$.

Now assume that $a(X)=0$. By the Kodaira--Enriques classification, if $X$ is K\"ahler, 
then $X$ is either a torus or a K3 surface.
However, a torus or a K3 surface of algebraic dimension zero contains no elliptic curve.
Hence $X$ is non-K\"ahler and therefore belongs to class $VII_0$.

Suppose that $b_2(X)>0$. By \cite[Main Theorem]{Eno81}, 
$X\setminus D$ is biholomorphic to the complement of the zero section $C_0$
in a projective bundle over an elliptic curve, where
\[
C_0^2=-b_2(X).
\]
Thus $C_0$ can be blown down by Grauert's criterion to a normal surface $Y$
such that $C_0$ is mapped to a point $p$.
By \cite[Theorem 1]{Ued83}, there exists a non-constant psh function on
$X\setminus D$.
The induced psh function on $Y\setminus\{p\}$ extends across $p$ since $Y$
is normal.
Since $Y$ is compact, the extended psh function is constant, a contradiction.

Therefore,
$
b_2(X)=0.
$
By \cite[Theorem 34]{Koi66}, $X$ is a Hopf surface since it contains a curve.
\end{proof}
%Define $D_1=\{z_1=0\}$.
%Define
%\[
%R_s:=r_1+\cdots+r_s,\qquad R_0:=0.
%\]
%and
%\[
%I:=
%\left\{
%1\le s\le\mu-1\;\middle|\;
%P_{R_s+1}
%=
%\cdots
%=
%P_{R_{s+1}}
%=
%0
%\right\}.
%\]
%Define \[
%D_{s+1}
%=
%\{z_{R_s+1}=0\},
%\qquad s\in I.
%\]
%Thus
%$$-K_X=\sum_{s \in I} r_{s+1} D_{s+1} +(\sum_{s \notin I }r_{s+1}+r_1)D_1.$$

Department of Mathematics, Graduate School of Science, Osaka Metropolitan University, 3-3-138 Sugimoto, Osaka 558-8585, Japan.\\
Email address:  y25161q@omu.ac.jp.  
  \end{document}